\begin{document}
\title{Notes on the Chernoff estimate}

\author{Valentin A.Zagrebnov$^*$}

\address{Institut de Math\'{e}matiques de Marseille\\
CNRS - Universit\'{e} d'Aix-Marseille \\
Marseille 13453, France\\
$^*$E-mail: Valentin.Zagrebnov@univ-amu.fr}


\noindent
\textit{Dedicated to Igor Volovich, friend, colleague and coauthor,}

\hspace{5cm} \textit{on the occasion of his 75th birthday.}


\begin{abstract}
The purpose of the present notes is to examine the following issues related to the
the Chernoff estimate: (1) For contractions on a Banach space we modify
the $\sqrt{n}$-estimate and apply it in the proof of the Chernoff product formula
for $C_0$-semigroups in the \textit{strong} operator topology.
(2) We use the idea of a {probabilistic} approach, proving the
Chernoff estimate in the strong operator topology, to uplift it to the
\textit{operator-norm} estimate for \textit{quasi-sectorial} contraction semigroups.
(3) The operator-norm Chernoff estimate is applied to {quasi-sectorial} contraction semigroups
for proving the operator-norm convergence of the \textit{Dunford-Segal} approximants.
\end{abstract}

\keywords{Semigroup theory; Chernoff lemma; Quasi-sectorial contractions; Product formulae.}



\bodymatter
\section{Introduction}\label{sec:1}
\setcounter{equation}{0}
\renewcommand{\theequation}{\arabic{section}.\arabic{equation}}
The Chernoff $\sqrt{n}$-Lemma, see Lemma 2 in Ref.\citenum{Cher68}, is known as a key tool
in the theory of semigroup approximations, see, for example, Ref.\citenum {EN00}
(Chapter III, Section 5) and Ref.\citenum{Zag20}. A large variety of applications
of the Chernoff approximation method (in the strong operator topology) one finds in a recent
survey Ref.\citenum{But20} (Section 2).
For the reader convenience and for motivation of the present notes we show this lemma in the
following below.

\begin{lemma}\label{lem:2.1.9}
Let bounded operator $C$ on a Banach space $\mathfrak{X}$
\emph{(}$C \in \mathcal{L}(\mathfrak{X})$\emph{)} be a contraction, that is, $\|C\| \leq 1$. Then
$\{e^{t \, (C - \mathds{1})}\}_{t\geq 0}$ is a norm-continuous contraction semigroup on
$\mathfrak{X}$ and one has the estimate
\begin{equation}\label{eq:2.1.10}
\|(C^n - e^{n \, (C-\mathds{1})})x\| \leq \ \sqrt{n} \ \|(C-\mathds{1})x\| \ \ ,
\end{equation}
for all $x\in\mathfrak{X} $ and natural $\ n \in \mathbb{N}$.
\end{lemma}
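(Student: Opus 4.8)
The plan is to combine the power-series representation of the semigroup with an elementary ``Poisson variance'' estimate. First I would settle the two preliminary assertions. Since $C-\mathds{1}\in\mathcal{L}(\mathfrak{X})$, the exponential series defines a norm-continuous semigroup $\{e^{t(C-\mathds{1})}\}_{t\geq0}$ in the standard way (the semigroup law follows from the commutativity of the terms); moreover, factoring $e^{t(C-\mathds{1})}=e^{-t}e^{tC}=e^{-t}\sum_{k\geq0}t^{k}C^{k}/k!$ and invoking $\|C\|\leq1$ gives
\[
\bigl\|e^{t(C-\mathds{1})}\bigr\|\leq e^{-t}\sum_{k=0}^{\infty}\frac{t^{k}}{k!}\,\|C\|^{k}\leq e^{-t}e^{t}=1,
\]
so the semigroup is indeed contractive.

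For the main estimate, the key algebraic identity is that, because $e^{-n}\sum_{k\geq0}n^{k}/k!=1$, one may insert $C^{n}$ under the exponential sum:
\[
C^{n}-e^{n(C-\mathds{1})}=e^{-n}\sum_{k=0}^{\infty}\frac{n^{k}}{k!}\,\bigl(C^{n}-C^{k}\bigr).
\]
Next I would bound each difference $\|(C^{n}-C^{k})x\|$. For $k\leq n$ one has the telescoping factorization $C^{n}-C^{k}=C^{k}(C-\mathds{1})\sum_{j=0}^{n-k-1}C^{j}$, and for $k>n$ the symmetric one with $n$ and $k$ interchanged; since $\|C\|\leq1$, both give
\[
\|(C^{n}-C^{k})x\|\leq|n-k|\;\|(C-\mathds{1})x\|.
\]

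Substituting this into the previous display reduces the claim to the purely numerical inequality $e^{-n}\sum_{k\geq0}(n^{k}/k!)\,|n-k|\leq\sqrt{n}$. I would prove it by reading the left-hand side as $\mathbb{E}\,|X_{n}-n|$ with $X_{n}$ a Poisson random variable of parameter $n$, and then applying the Cauchy--Schwarz inequality,
\[
\mathbb{E}\,|X_{n}-n|\leq\bigl(\mathbb{E}\,(X_{n}-n)^{2}\bigr)^{1/2}=\bigl(\mathrm{Var}\,X_{n}\bigr)^{1/2}=\sqrt{n};
\]
equivalently, one checks directly that $\sum_{k\geq0}(n^{k}/k!)(n-k)^{2}=n\,e^{n}$ and applies Cauchy--Schwarz to the sum $\sum_k \sqrt{n^k/k!}\cdot\sqrt{n^k/k!}\,|n-k|$. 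Combining the three displayed estimates yields (\ref{eq:2.1.10}). The computation is short; the only genuine idea is in this last step, where the Poisson weights $e^{-n}n^{k}/k!$ must be paired against $|n-k|$ through the standard deviation $\sqrt{n}$ rather than estimated term by term, which would be far too lossy. Everything else is bookkeeping with the contraction bound $\|C\|\leq1$.
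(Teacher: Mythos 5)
Your argument is correct and is essentially the paper's own proof: the same Poisson-weighted representation $C^{n}-e^{n(C-\mathds{1})}=e^{-n}\sum_{k}\frac{n^{k}}{k!}(C^{n}-C^{k})$, the same telescoping bound $\|(C^{n}-C^{k})x\|\leq|n-k|\,\|(C-\mathds{1})x\|$, and the same Cauchy--Schwarz step reducing everything to $\mathbb{E}\,|X_{n}-n|\leq\sqrt{\mathrm{Var}\,X_{n}}=\sqrt{n}$. The only (welcome) addition is that you spell out the contractivity of $e^{t(C-\mathds{1})}$, which the paper defers to a later lemma.
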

\begin{proof}
To prove the inequality (\ref{eq:2.1.10}) we use the representation
\begin{equation}\label{eq:2.1.11}
C^n-e^{n(C-\mathds{1})} = e^{-n}\sum_{m=0}^\infty \frac{n^m}{m!}(C^n-C^m) \ .
\end{equation}
To proceed we insert
\begin{equation}\label{eq:2.1.12}
\|(C^n-C^m)x\|  \leq  \left\|(C^{|n-m|}  - \mathds{1})x \right\|\leq  |m-n|
\|(C- \mathds{1})x \| \ ,
\end{equation}
into (\ref{eq:2.1.11}) to obtain by the Cauchy-Schwarz inequality  the estimate:
\begin{equation}\label{eq:2.1.13}
\begin{split}
&\|(C^n - e^{n(C-\mathds{1})})x \| \leq \|(C - \mathds{1})x \| \
e^{-n} \ \sum_{m=0}^\infty \frac{n^m}{m!} |m-n| \leq\\
&  \{\sum_{m=0}^\infty  e^{-n} \  \frac{n^m}{m!} |m-n|^2\} ^{1/2} \|(C - \mathds{1})x\| \ ,
\ x\in\mathfrak{X} \ ,
\end{split}
\end{equation}
Note that the sum in the right-hand side of (\ref{eq:2.1.13}) can be calculated explicitly.
It is equal to $\sqrt{n}$, which yields (\ref{eq:2.1.10}).
\end{proof}

The aim of the present notes is to scrutinise the following issues related to the
the Chernoff estimate.\\
First, we modify for contractions on a Banach space $\mathfrak{X}$ the $\sqrt{n}$-estimate
(\ref{eq:2.1.10}) and apply new estimates \textit{\`{a} la} Chernoff (see Section \ref{sec:2}
and Section \ref{sec:3}) in the proof of the Chernoff product formula for \textit{strongly}
continuous semigroups ($C_0$-semigroups) in the \textit{strong} operator topology, cf.
Ref.\citenum{But20}.\\
Second, we use the idea of the \textit{probabilistic} approach \cite{CaZ01,Zag17}, that proving the
Chernoff estimate in the strong operator topology (Section \ref{sec:2}), to uplift it to the
\textit{operator-norm} estimate for a special class of contractions: the \textit{quasi-sectorial}
contractions, see Section \ref{sec:4}.\\
Finally, in Section \ref{sec:5} we use the operator-norm Chernoff estimate
for illustration of its \textit{direct} application in the approximation theory of holomorphic
$C_0$-semigroups for $m$-sectorial generators (that is, for \textit{quasi-sectorial} contractions)
in a Hilbert space. This allows to prove, besides the
\textit{Euler} approximation formula, the operator-norm convergence with optimal rate of the
\textit{Dunford-Segal} approximants \cite{GoTo14}.

We warn the readers against a confusion between our probabilistic approach \cite{CaZ01,Zag17}
to alternative proof of the Chernoff estimate and a probabilistic approach to representation of
$C_0$-semigroups \cite{Pf93} exploited in Ref.\citenum{GoKoTo19} for developing the approximation
theory of operator semigroups.


\section{The $\sqrt[3]{n}$-Lemma and Product Formul{\ae}}\label{sec:2}
We start by a technical lemma. It is a \textit{revised} version of the estimate (\ref{eq:2.1.10}).
Our \textit{variational} estimate (\ref{eq:1.8.51}) in
$\sqrt[3]{n}$-Lemma \ref{lem:1.8.23} and the \textit{probabilistic} approach are, in a certain
sense, more flexible than (\ref{eq:2.1.10}). Indeed, a revised scheme of the proof will be
used later (Section \ref{sec:4}) for \textit{uplifting} the convergence of the Chernoff and
the Lie-Trotter product formul{\ae} to the \textit{operator-norm} topology.
\begin{lemma}\label{lem:1.8.23}\emph{($\sqrt[3]{n}\ $-Lemma)}
Let $C$ be a contraction on a Banach space $\mathfrak{X}$. Then
$\{e^{t (C - \mathds{1}}\}_{t\geq 0}$ is a norm-continuous
contraction semigroup on $\mathfrak{X}$ and one has the estimate
\begin{equation}\label{eq:1.8.51}
\|(C^n- e^{n\, (C-\mathds{1})})\, x\|\leq \frac{n}{\epsilon_n^2} \ 2\, \|x\| + \epsilon_n
\ \|(\mathds{1}-C)\, x\, \|\, , \quad \ n\in {\mathds{N}} \, ,
\end{equation}
for all $x\in \mathfrak{X}$ and $\epsilon_n > 0$. For the optimal value of the
parameter $\epsilon_n$ \emph{:}
\begin{equation}\label{eq:1.8.51-0}
\epsilon_{n}^* := \left(\frac{4\, n \, \|x\|}{\|(\mathds{1}-C)\, x\, \|}\right)^{1/3},
\end{equation}
in the right-hand side of \emph{(\ref{eq:1.8.51})} we obtain estimate
\begin{equation}\label{eq:1.8.51-1}
\|(C^n- e^{n\, (C-\mathds{1})})\,\|\leq
\frac{3}{2} \, \sqrt[3]{n} \ \|2 \ (\mathds{1}-C)\,\|^{2/3} \,  ,
\end{equation}
which we call the $\sqrt[3]{n}\ $-Lemma.
\end{lemma}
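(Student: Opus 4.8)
The plan is to follow the architecture of the proof of Lemma~\ref{lem:2.1.9}, but to combine the Lipschitz-type bound (\ref{eq:2.1.12}) with the trivial a~priori bound $\|(C^n-C^m)x\|\le 2\|x\|$ that holds because $C$ is a contraction, and to read the weights $e^{-n}n^m/m!$ as the law of a Poisson random variable $N_n$ of mean and variance $n$. The assertion that $\{e^{t(C-\mathds{1})}\}_{t\ge0}$ is a norm-continuous contraction semigroup is identical to the corresponding statement in Lemma~\ref{lem:2.1.9}: the generator $C-\mathds{1}$ is bounded, and $\|e^{t(C-\mathds{1})}\|\le e^{-t}e^{t\|C\|}\le1$.

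First I would start from the representation (\ref{eq:2.1.11}) and the triangle inequality, then split the series $\sum_{m\ge0}$ at the level $|m-n|=\epsilon_n$ into a \emph{central} part $\{m:|m-n|\le\epsilon_n\}$ and its complement. On the central part I use (\ref{eq:2.1.12}) in the form $\|(C^n-C^m)x\|\le|m-n|\,\|(\mathds{1}-C)x\|\le\epsilon_n\,\|(\mathds{1}-C)x\|$ and then discard the remaining Poisson weights against $1$; this contributes the term $\epsilon_n\,\|(\mathds{1}-C)x\|$. On the complement I use instead $\|(C^n-C^m)x\|\le\|C^nx\|+\|C^mx\|\le2\|x\|$, so that block is bounded by $2\|x\|\,\mathbb{P}(|N_n-n|>\epsilon_n)$; by Chebyshev's inequality $\mathbb{P}(|N_n-n|>\epsilon_n)\le\mathrm{Var}(N_n)/\epsilon_n^{2}=n/\epsilon_n^{2}$, which gives the term $2n\|x\|/\epsilon_n^{2}$. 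Adding the two contributions yields (\ref{eq:1.8.51}).

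It remains to optimize the right-hand side of (\ref{eq:1.8.51}) over $\epsilon_n>0$. Setting $a:=2n\|x\|$, $b:=\|(\mathds{1}-C)x\|$ and minimizing $\epsilon\mapsto a\epsilon^{-2}+b\epsilon$, the stationarity condition $2a\epsilon^{-3}=b$ yields $\epsilon_n^{*}=(2a/b)^{1/3}=(4n\|x\|/\|(\mathds{1}-C)x\|)^{1/3}$, i.e.\ (\ref{eq:1.8.51-0}); substituting back gives the minimal value $3\cdot2^{-2/3}\,a^{1/3}b^{2/3}=3\cdot2^{-1/3}\,n^{1/3}\,\|x\|^{1/3}\,\|(\mathds{1}-C)x\|^{2/3}$. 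Finally I would bound $\|(\mathds{1}-C)x\|\le\|\mathds{1}-C\|\,\|x\|$ and observe that $3\cdot2^{-1/3}\,\|\mathds{1}-C\|^{2/3}=\frac{3}{2}\,\|2(\mathds{1}-C)\|^{2/3}$, so that taking the supremum over $\|x\|\le1$ produces precisely (\ref{eq:1.8.51-1}).

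There is no real obstacle in this argument; the only point needing care is the bookkeeping at the split — one must make sure the ``far'' block is exactly the Poisson tail event $\{|N_n-n|>\epsilon_n\}$ so that the plain second-moment estimate applies with the exact variance $n$. This is what forces the coefficient of $n/\epsilon_n^{2}$ to be $2$ rather than something larger, and hence what produces the clean constant $3/2$ after the elementary one-variable minimization above.
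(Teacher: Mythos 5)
Your proposal is correct and follows essentially the same route as the paper's own proof: the Poisson representation (\ref{eq:1.8.52}), the split into a central block handled by the Lipschitz bound (\ref{eq:1.8.52-1}) and a tail block handled by the Tchebych\"{e}v inequality with variance $n$, and the same one-variable minimization giving $\epsilon_n^*$ and the constant $3\cdot 2^{-1/3}=\tfrac{3}{2}\cdot 2^{2/3}$. The only cosmetic difference is that the paper writes the central-part bound with the extra factor $C^{\,n-[\epsilon_n]}$ kept in front of $(\mathds{1}-C)x$ (in anticipation of the operator-norm uplift in Section~\ref{sec:4}), which is immaterial here.
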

\begin{proof}
Since operator $C$ is bounded and $\|C\|\leq 1$, the operator $(\mathds{1}-C)$ is generator of a
norm-continuous contraction semigroup since:
\begin{equation}\label{eq:1.8.51-2}
\|\, e^{- t \,(\mathds{1}- C)}\, \| \leq e^{-t} \
\left\|\sum_{m=0}^{\infty} \frac{t^m}{m!} C^m \right\| \leq 1 \, , \quad t\geq 0 \, .
\end{equation}

For proving the estimate (\ref{eq:1.8.51}) we use representation
\begin{equation}\label{eq:1.8.52}
C^n-e^{n(C-\mathds{1})} = e^{-n} \ \sum_{m=0}^\infty \frac{n^m}{m!}\, (C^n-C^m)\, .
\end{equation}
%
Then we \textit{split} the sum (\ref{eq:1.8.52})
into two parts: the \textit{central} part for $|m-n|\leq\epsilon_n$ and the \textit{tails} for
$|m-n|>\epsilon_n$. Optimisation of the \textit{splitting} parameter $\epsilon_n$ in
(\ref{eq:1.8.51}) yields the best estimate (\ref{eq:1.8.51-1}).

For evaluation the \textit{tails} we use the \textit{Tchebych\"{e}v inequality}. Let
$X_n \in \mathds{N}_0$
be the \textit{Poisson random variable} with the \textit{rate} parameter $n\in \mathds{N}$,
that is, with the
probability distribution $\mathbb{P}\{X_{n} = m\} = n^m e^{-n}/m! \, $. Then one gets for
expectation: ${\mathbb{E}}(X_{n})=n$, and for variance: $\mbox{Var}(X_{n}):=
{\mathbb{E}}((X_{n} - {\mathbb{E}}(X_{n}))^{2}) = n$. That being so, the Tchebych\"{e}v inequality
yields
\begin{equation}\label{eq:1.8.52-0}
 \ \ \mathbb{P}\{|X_{n} - {\mathbb{E}}(X_{n})|>\epsilon\} \leq
 \frac{\mbox{Var}(X_{n})}{\epsilon_{n}^2}\, ,
 \ \ {\rm{for \ any}\ } \ \epsilon_n >0 .
\end{equation}

Note that although for any $x \in \mathfrak{X}$ there is
an evident bound: $\|(C^n-C^m)\, x\| \leq 2 \, \|x\|$, when estimating (\ref{eq:1.8.52})
we shall also use below inequalities
\begin{equation}\label{eq:1.8.52-1}
\begin{split}
\|(C^{\,n}-C^{\,m})\, x\| & = \|C^{\, n-k}(C^{k}-C^{\, m-n+k})\, x\| \\
& \leq |m-n| \ \|C^{\, n-k}(\mathds{1}-C)\, x\| , \ \ \ k= 0,1,\ldots,n \ ,
\end{split}
\end{equation}
that keep \textit{difference}: $(\mathds{1}-C)\, x$.
Then by $\|C\|\leq 1$ and by the Tchebych\"{e}v inequality (\ref{eq:1.8.52-0}) we obtain the
estimate for
\textit{tails}:
\begin{equation}\label{eq:1.8.53}
\begin{split}
&e^{-n}\sum_{|m-n|>\epsilon_n} \frac{n^m}{m!} \|(C^n-C^m)\, x\| \leq
e^{-n}\sum_{|m-n|>\epsilon_n} \frac{n^m}{m!} \cdot \ 2 \, \|x\| \, \\
& = \ \mathbb{P}\{|X_{n} - {\mathbb{E}}(X_{n})|>\epsilon_n\} \cdot \ 2\, \|x\| \leq
\frac{n}{\epsilon_n^2} \ 2\, \|x\| \, .
\end{split}
\end{equation}

To evaluate the \textit{central} part of the sum (\ref{eq:1.8.52}), when $|m-n|\leq\epsilon_n$,
note that by virtue of (\ref{eq:1.8.52-1}):
\begin{eqnarray}\label{eq:1.8.53-0}
\|(C^n-C^m)x\| &\leq & |m-n| \ \|C^{n-[\epsilon_n]}\, (\mathds{1}-C) x\| \\
& \leq & \epsilon_n \ \|(\mathds{1}-C) \, x\| . \nonumber
\end{eqnarray}
Then we obtain:
\begin{equation}\label{eq:1.8.53-1}
e^{-n} \, \sum_{|m-n|\leq\epsilon_n} \frac{n^m}{m!} \ \|(C^n-C^m)\, x\| \leq \epsilon_n \
\|(\mathds{1}-C)\, x\| \, , \quad x \in \mathfrak{X} \, ,
\end{equation}
for $n\in {\mathds{N}}$. Estimate (\ref{eq:1.8.53-1}), together with (\ref{eq:1.8.53}),
yield (\ref{eq:1.8.51}) for all $x\in \mathfrak{X}$ and $\epsilon_n > 0$.

Minimising the estimate (\ref{eq:1.8.51}) with respect to parameter $\epsilon_n > 0$ one gets
for $\epsilon_{n}$ the optimal value (\ref{eq:1.8.51-0}) and
\begin{equation}\label{eq:1.8.53-2}
\frac{n}{{\epsilon_{n}^{*}}^{2}} \ 2\, \|x\| + \epsilon_{n}^* \ \|(\mathds{1}-C)\, x\, \| =
\frac{3}{2} \, \sqrt[3]{n} \ (4\, \|x\|)^{1/3} \ \|(\mathds{1}-C)\, x\, \|^{2/3} \,  ,
\end{equation}
for all $x\in \mathfrak{X}$ and $n\in {\mathds{N}}$. As a consequence, (\ref{eq:1.8.51})
and (\ref{eq:1.8.53-2}) yield estimate (\ref{eq:1.8.51-1}), which is the $\sqrt[3]{n}\ $-Lemma.
\end{proof}
\begin{theorem}\label{th:1.8.24}\emph{(Chernoff product formula \cite{Cher68, Cher74})}
Let $\Phi: t\mapsto\Phi(t)$ be a function from $\mathbb{R}_{0}^+$ to contractions on $\mathfrak{X}$
such that $\Phi(0) = \mathds{1}$. Let $\{U_{A}(t)\}_{t\geq 0}$ be a contraction $C_0$-semigroup,
and let domain $D \subset {\rm{dom}}(A)$ be a core of related generator $A$.

If the function $\Phi(t)$ has a strong right-derivative $\Phi'(+0)$ at $t=0$
{\emph{(}}that is, $\Phi'(+0)x$ exists for any $x\in {\rm{dom}}(\Phi'(+0))${\emph{)}} and if
\begin{equation}\label{eq:1.8.55-0}
\Phi'(+0)\, x := \lim_{t\rightarrow +0} \frac{1}{t} (\Phi(t)- \mathds{1}) \, x = -\, A \, x \ ,
\end{equation}
for all $x\in D$, then
\begin{equation}\label{eq:1.8.55}
\lim_{n \rightarrow \infty} [\Phi(t/n)]^n \, x = U_{A}(t)\, x  \ ,
\end{equation}
for all $t\in \mathbb{R}_{0}^+$ and $x\in \mathfrak{X}$.
\end{theorem}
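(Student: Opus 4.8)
The plan is to interpolate between the Chernoff approximants $[\Phi(t/n)]^{n}$ and the target $U_{A}(t)$ through the exponentials $e^{\,n(\Phi(t/n)-\mathds{1})}$, handling one comparison by the $\sqrt[3]{n}\ $-Lemma and the other by a Trotter--Kato argument, and then upgrading convergence from the core $D$ to all of $\mathfrak{X}$ by density. Fix $t\ge 0$ (the case $t=0$ is trivial) and $x\in D$. Applying Lemma \ref{lem:1.8.23} to the contraction $C=\Phi(t/n)$ in the \emph{vector} form (\ref{eq:1.8.51})--(\ref{eq:1.8.53-2}) gives
\[
\|([\Phi(t/n)]^{n}-e^{\,n(\Phi(t/n)-\mathds{1})})\,x\|\ \le\ \frac{3}{2}\,\sqrt[3]{n}\,(4\|x\|)^{1/3}\,\|(\mathds{1}-\Phi(t/n))\,x\|^{2/3}\,.
\]
By hypothesis (\ref{eq:1.8.55-0}) the vectors $\frac{n}{t}(\mathds{1}-\Phi(t/n))\,x$ converge to $Ax$, hence are bounded in $n$, so $\|(\mathds{1}-\Phi(t/n))\,x\|\le (t/n)\,M_{x}$ with $M_{x}:=\sup_{n}\frac{n}{t}\|(\mathds{1}-\Phi(t/n))\,x\|<\infty$; substituting, the right-hand side is $O(n^{1/3}\,n^{-2/3})=O(n^{-1/3})\to 0$. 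Thus $\|([\Phi(t/n)]^{n}-e^{\,n(\Phi(t/n)-\mathds{1})})\,x\|\to 0$ for every $x\in D$.

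For the second comparison fix $t>0$ and set $Z_{n}:=\frac{n}{t}(\mathds{1}-\Phi(t/n))\in\mathcal{L}(\mathfrak{X})$, so that $-Z_{n}=\frac{n}{t}(\Phi(t/n)-\mathds{1})$ generates, exactly as in (\ref{eq:1.8.51-2}), the norm-continuous contraction semigroup $\{e^{-sZ_{n}}\}_{s\ge 0}$, with $e^{-tZ_{n}}=e^{\,n(\Phi(t/n)-\mathds{1})}$. On the core $D$ one has $Z_{n}x\to Ax$ by (\ref{eq:1.8.55-0}), while $-A$ is the generator of the contraction $C_{0}$-semigroup $U_{A}$; the Trotter--Kato approximation theorem (see e.g.\ Ref.~\citenum{EN00}) then yields $e^{-sZ_{n}}x\to U_{A}(s)x$ for every $x\in\mathfrak{X}$, uniformly for $s$ in compact intervals. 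Taking $s=t$, $e^{\,n(\Phi(t/n)-\mathds{1})}x\to U_{A}(t)x$, in particular for $x\in D$.

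Combining the two steps by the triangle inequality gives $[\Phi(t/n)]^{n}x\to U_{A}(t)x$ for all $x\in D$. Since $\|[\Phi(t/n)]^{n}\|\le 1$ (a product of contractions) and $\|U_{A}(t)\|\le 1$, and $D$ is dense in $\mathfrak{X}$ (a core being dense), for arbitrary $x\in\mathfrak{X}$ and $\varepsilon>0$ one picks $y\in D$ with $\|x-y\|<\varepsilon$ and obtains $\limsup_{n}\|[\Phi(t/n)]^{n}x-U_{A}(t)x\|\le 2\varepsilon+\limsup_{n}\|[\Phi(t/n)]^{n}y-U_{A}(t)y\|=2\varepsilon$; letting $\varepsilon\downarrow 0$ proves (\ref{eq:1.8.55}) for all $x\in\mathfrak{X}$ and $t\in\mathbb{R}_{0}^{+}$.

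The one genuinely delicate point is the second step: one cannot pass from $Z_{n}x\to Ax$ on the core to $e^{-tZ_{n}}x\to U_{A}(t)x$ by a direct density argument on $\mathrm{dom}(A)$, because $\|Z_{n}\|\to\infty$ in general. The correct mechanism (built into the proof of Trotter--Kato) is first to upgrade the convergence on $D$ to strong convergence of the resolvents $(\lambda+Z_{n})^{-1}\to(\lambda+A)^{-1}$ for some $\lambda>0$ --- here the \emph{core} hypothesis is precisely what makes $(\lambda+A)\,D$ dense in $\mathfrak{X}$ and lets the uniform bound $\|(\lambda+Z_{n})^{-1}\|\le 1/\lambda$ finish the argument --- and only then to deduce the semigroup convergence from the resolvent convergence. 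If a self-contained account is preferred, this resolvent step together with the classical passage from resolvent convergence to semigroup convergence can be inserted in place of the citation; everything else above is elementary.
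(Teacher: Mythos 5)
Your proposal is correct and follows essentially the same route as the paper: the same interpolation through $e^{\,n(\Phi(t/n)-\mathds{1})}$, the $\sqrt[3]{n}$-Lemma giving the $O(n^{-1/3})$ bound on the core via $n^{-1}\|A_n(t)x\|$, the Trotter--Neveu--Kato theorem for the second comparison, and the density/uniform-boundedness extension to all of $\mathfrak{X}$. Your closing remark on why the core hypothesis is what drives the resolvent convergence is a sound gloss on the step the paper delegates to the cited theorem.
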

\begin{proof}
Consider the bounded approximations $\{A_n(s)\}_{n \geq 1}$ of generator $A$:
\begin{equation}\label{eq:1.8.56}
A_n(s) := \frac{\mathds{1} - \Phi(s/n)}{s/n} \ ,  \quad s \in {\mathds{R}}^{+}  \, ,
\quad n \in {\mathds{N}} \, .
\end{equation}
Note that these operators are $m$-accretive:
$\|(A_n(s) + \zeta \, \mathds{1})^{-1}\| \leq ({\rm{Re}} (\zeta))^{-1}$ for ${\rm{Re}}(\zeta) > 0$
and for any $n \in {\mathds{N}}$. By $\|\Phi(t)\|\leq 1$ together with (\ref{eq:1.8.56})
and condition (\ref{eq:1.8.55-0}) we obtain $\|e^{- t \, A_n(s)}\| \leq 1$, but also
\begin{equation}\label{eq:1.8.57}
\lim_{n \rightarrow \infty} A_n(s) \, x = A \, x \ ,
\end{equation}
for all $x \in D $ and any $s \in {\mathds{R}}^{+}$.
Then, given that $D = {\rm{core}}(A)$, by virtue of
the \textit{Trotter-Neveu-Kato} generalised strong convergence theorem (see, e.g.,
Ref.\citenum{Dav80} (Theorem 3.17), or Ref.\citenum{EN00} (Chapter III, Theorem 4.9)) one obtains
\begin{equation}\label{eq:1.8.58}
\lim_{n \rightarrow \infty} e^{- t \, A_n(s)} \, x = U_{A}(t)\, x \ , \quad x \in \mathfrak{X}\, ,
\quad s > 0 \, , \quad  t\in {\mathds{R}}_{0}^{+}\, .
\end{equation}
So, (\ref{eq:1.8.58}) is the strong and \textit{uniform} in $t$ and in $s$ convergence of
contractive approximants $\{e^{- t \, A_n(s)}\}_{n\geq 1}$ for $t \in {[0, \tau]}$ and
$s \in {(0, s_{0}]}$.

Now, by Lemma \ref{lem:1.8.23} (\ref{eq:1.8.51-1}) we obtain for contraction $C := \Phi(t/n)$:
\begin{equation}\label{eq:1.8.59}
\begin{split}
&\|[\Phi(t/n)]^n \, x - e^{- t \, A_n(t)} \, x\| = \|([\Phi(t/n)]^n - e^{{n}(\Phi(t/n)-
\mathds{1})}) \ x \| \\
&\leq \frac{3}{2} \, \sqrt[3]{n} \ (4\, \|x\|)^{1/3} \ \|(\mathds{1}-\Phi(t/n))\, x\, \|^{2/3}
\ , \quad x \in \mathfrak{X}\ .
\end{split}
\end{equation}
Since by (\ref{eq:1.8.57}) one gets for any $x\in D $ and uniformly on $(0,t_0]$:
\begin{equation}\label{eq:1.8.60}
\lim_{n \rightarrow \infty} \sqrt[3]{n} \ \|\, (\mathds{1}- \Phi(t/n)) \ x \,\|^{2/3} =
\lim_{n \rightarrow \infty} t^{2/3} \, n^{- 1/3} \ \|A_n(t) \ x\|^{2/3} = 0 \ ,
\end{equation}
equations (\ref{eq:1.8.59}) and (\ref{eq:1.8.60}) provide uniformly on $(0,t_0]$
\begin{equation}\label{eq:1.8.61}
\lim_{n \rightarrow \infty}\|\,[\Phi(t/n)]^n \, x - e^{- t \, A_n(t)} \, x\| = 0 , \
\quad x \in D \ .
\end{equation}
Then (\ref{eq:1.8.58}) and (\ref{eq:1.8.61}) yield uniformly in $t\in [0, t_0]$ the limit:
\begin{equation}\label{eq:1.8.55-1}
\lim_{n \rightarrow \infty} [\Phi(t/n)]^n \, x = U_{A}(t)\, x  \ , \quad x \in D \ .
\end{equation}

Note that by \textit{density} of $D$ and by the \textit{uniform} estimate
$\|\,[\Phi(t/n)]^n \, x - e^{- t \, A_n(t)}x\|\leq 2 \, \|x\|$ the convergence in (\ref{eq:1.8.61})
can be extended to all $x\in \mathfrak{X}$. Indeed, it is known that on the \textit{bounded}
subsets of $\mathcal{L}(\mathfrak{X})$ the topology of \textit{point-wise} convergence on a
\textit{dense} subset $D \subset\mathfrak{X}$ coincides with the \textit{strong} operator topology,
see, e.g., Ref.\citenum{Kato95} (Chapter III, Lemma 3.5).
As a consequence, the limit (\ref{eq:1.8.61}), when being extended to $x\in \mathfrak{X}$, and
limit (\ref{eq:1.8.58}) yield the extension of (\ref{eq:1.8.55-1}) to (\ref{eq:1.8.55}).
\end{proof}

The limit (\ref{eq:1.8.55}) that involves derivative (\ref{eq:1.8.55-0}) is called the
\textit{Chernoff product formula} for contractive $C_0$-semigroup $\{U_{A}(t)\}_{t\geq 0}$
in the \textit{strong} operator topology, cf. Ref.\citenum{EN00} (Chapter III, Section 5a.).
\begin{proposition}\label{prop:1.8.25}{\emph{(Lie-Trotter product formula \cite{Trot59, Cher74})}}
Let $A$, $B$ and $C$ be generators of contraction $C_0$-semigroups on $\mathfrak{X}$. Suppose that
algebraic sum
\begin{equation}\label{eq:1.8.62}
C x = A x + B x \ ,
\end{equation}
is valid for all $x\in D$, where domain $D = {\rm{core}}\,(C)$. Then the semigroup
$\{U_{C}(t)\}_{t\geq 0}$ can be
approximated on $\mathfrak{X}$ in the strong operator topology by the
Lie-Trotter product formula:
\begin{equation}\label{eq:1.8.63}
e^{- t C} \, x = \lim_{n\rightarrow \infty} (e^{- t A/n} e^{- t  B/n} )^n \, x  \ , \  \
\ x\in\mathfrak{X} \ ,
\end{equation}
for all $t\in \mathbb{R}_{0}^{+}$ and $C := \overline{(A + B)}$ is closure of the
operator-sum in {\emph{(\ref{eq:1.8.62})}}.
\end{proposition}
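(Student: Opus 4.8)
The plan is to obtain the Lie--Trotter formula as a direct corollary of the Chernoff product formula (Theorem~\ref{th:1.8.24}) by choosing the appropriate family $\Phi$. Concretely, I would set
\begin{equation*}
\Phi(t) := e^{- t A}\, e^{- t B} \, , \quad t \geq 0 \, ,
\end{equation*}
so that $[\Phi(t/n)]^n = (e^{- t A/n} e^{- t B/n})^n$ and $\Phi(0) = \mathds{1}$. Since $\{e^{-tA}\}_{t\ge0}$ and $\{e^{-tB}\}_{t\ge0}$ are contraction $C_0$-semigroups, $\Phi(t)$ is a product of contractions, hence a contraction on $\mathfrak{X}$ for every $t\ge0$; this verifies the standing hypotheses of Theorem~\ref{th:1.8.24}. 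It remains to identify the strong right-derivative $\Phi'(+0)$ on a suitable core and to check that it agrees with $-C$ there.

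The second step is the derivative computation. For $x\in D = {\rm core}(C) \subset {\rm dom}(A)\cap {\rm dom}(B)$ I would write the standard splitting
\begin{equation*}
\frac{1}{t}\bigl(\Phi(t) - \mathds{1}\bigr)x = \frac{1}{t}\bigl(e^{-tA} - \mathds{1}\bigr) e^{-tB} x + \frac{1}{t}\bigl(e^{-tB} - \mathds{1}\bigr) x \, .
\end{equation*}
The second term converges strongly to $-Bx$ as $t\to+0$ because $x\in{\rm dom}(B)$. For the first term I would use that $e^{-tB}x \to x$ and, more importantly, that $\tfrac{1}{t}(e^{-tA}-\mathds{1})e^{-tB}x = e^{-tB}\tfrac{1}{t}(e^{-tA}-\mathds{1})x$ (the two semigroups of $A$ do not commute with those of $B$ in general, but $e^{-tB}$ is a fixed bounded operator applied \emph{after} the difference quotient only if $A$ and $B$ commute --- so more carefully I would instead write $\tfrac{1}{t}(e^{-tA}-\mathds{1})e^{-tB}x$ and add and subtract $\tfrac{1}{t}(e^{-tA}-\mathds{1})x$, controlling $\tfrac{1}{t}(e^{-tA}-\mathds{1})(e^{-tB}-\mathds{1})x$). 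Using that $x\in{\rm dom}(A)$, the quantity $\tfrac{1}{t}(e^{-tA}-\mathds{1})x$ is uniformly bounded (by $\|Ax\|$) as $t\to+0$ and converges to $-Ax$, while $(e^{-tB}-\mathds{1})x\to0$; combined with the strong continuity of $\{e^{-tA}\}$ this yields $\tfrac{1}{t}(e^{-tA}-\mathds{1})e^{-tB}x \to -Ax$. Hence $\Phi'(+0)x = -Ax - Bx = -Cx$ for all $x\in D$, which is exactly the hypothesis~(\ref{eq:1.8.55-0}) of Theorem~\ref{th:1.8.24}.

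The third step is purely an invocation: with $D = {\rm core}(C)$ and $\Phi'(+0)x = -Cx$ on $D$, Theorem~\ref{th:1.8.24} applied to $\Phi$ and the semigroup $\{U_C(t)\}_{t\ge0} = \{e^{-tC}\}_{t\ge0}$ gives
\begin{equation*}
\lim_{n\to\infty} (e^{-tA/n} e^{-tB/n})^n x = e^{-tC} x \, ,
\end{equation*}
for all $x\in\mathfrak{X}$ and $t\in\mathbb{R}_0^+$, which is~(\ref{eq:1.8.63}). One small point to record is the consistency of the two descriptions of $C$ in the statement: $C$ is assumed to be a generator with $Cx = Ax+Bx$ on a core $D$, and one notes that then $C = \overline{(A+B)}$, i.e.\ the operator sum $A+B$ defined on ${\rm dom}(A)\cap{\rm dom}(B)$ is closable with closure $C$ --- this is immediate since $C$ is closed, extends $A+B$, and $D\subset{\rm dom}(A+B)$ is a core for $C$.

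The main obstacle is the non-commutativity in the derivative computation of step two: one must be careful that $e^{-tB}$ sits on the correct side of the difference quotient and handle the cross term $\tfrac{1}{t}(e^{-tA}-\mathds{1})(e^{-tB}-\mathds{1})x$. The clean way is to bound $\bigl\|\tfrac{1}{t}(e^{-tA}-\mathds{1})(e^{-tB}-\mathds{1})x\bigr\|$ using $\tfrac{1}{t}(e^{-tA}-\mathds{1}) = -\tfrac{1}{t}\int_0^t e^{-sA}A\,ds$ applied to the vector $(e^{-tB}-\mathds{1})x\in{\rm dom}(A)$ --- but this requires $(e^{-tB}-\mathds{1})x\in{\rm dom}(A)$, which need not hold. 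Instead I would keep $\tfrac{1}{t}(e^{-tA}-\mathds{1})e^{-tB}x$ intact and use the integral representation on the $A$-side: $\tfrac{1}{t}(e^{-tA}-\mathds{1})y = -\tfrac{1}{t}\int_0^t e^{-sA} A y\,ds$ is valid for $y\in{\rm dom}(A)$; choosing $y=x$ and adding and subtracting, $\tfrac{1}{t}(e^{-tA}-\mathds{1})e^{-tB}x = e^{-tB}\cdot(-\tfrac1t\int_0^t e^{-sA}Ax\,ds) + \tfrac1t(e^{-tA}-\mathds{1})(e^{-tB}-\mathds{1})x$ is \emph{not} quite right either because $e^{-tB}$ and $e^{-sA}$ do not commute. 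The genuinely correct and standard route is: $\tfrac1t(e^{-tA}-\mathds1)e^{-tB}x + \tfrac1t(e^{-tB}-\mathds1)x$; rewrite the first summand as $\tfrac1t(e^{-tA}-\mathds1)x + \tfrac1t(e^{-tA}-\mathds1)(e^{-tB}-\mathds1)x$, note $\|\tfrac1t(e^{-tA}-\mathds1)z\|\le\|Az\|$ fails for general $z$ but $\|\tfrac1t(e^{-tA}-\mathds1)\|\le \tfrac{2}{t}$ is useless; the honest bound is to use strong continuity: $\tfrac1t(e^{-tA}-\mathds1)(e^{-tB}-\mathds1)x \to 0$ because $\tfrac1t(e^{-tA}-\mathds1)x\to -Ax$ so the family $\{\tfrac1t(e^{-tA}-\mathds1)\}$ restricted to the \emph{compact} set $\{(e^{-tB}-\mathds1)x : t\in[0,1]\}\cup\{0\}$ need not be equicontinuous. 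Thus the cleanest argument is the one via $\tfrac1t(e^{-tA}-\mathds1)e^{-tB}x = -\tfrac1t\int_0^t e^{-sA}e^{-tB}Ax\,ds$ when $x\in{\rm dom}(A)$ — here $e^{-tB}$ commutes with nothing, but the integrand $e^{-sA}e^{-tB}Ax$ is jointly strongly continuous in $(s,t)$ and equals $Ax$ at $s=t=0$, so the average converges to $Ax$ as $t\to+0$; this is where the real care goes, and I would present it in this integral form.
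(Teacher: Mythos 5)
Your overall route is exactly the paper's: set $\Phi(t):=e^{-tA}e^{-tB}$, check that it is a contraction with $\Phi(0)=\mathds{1}$, verify $\Phi'(+0)x=-(A+B)x$ on $D={\rm core}(C)$, and invoke Theorem~\ref{th:1.8.24}. The paper states the derivative identity (\ref{eq:1.8.65}) without proof; you try to prove it, correctly sense where the difficulty lies, but the resolution you finally settle on is wrong. The identity
\begin{equation*}
\frac1t\bigl(e^{-tA}-\mathds{1}\bigr)e^{-tB}x \;=\; -\frac1t\int_0^t e^{-sA}e^{-tB}Ax\,ds
\end{equation*}
does not hold in general. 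The fundamental-theorem formula $(e^{-tA}-\mathds{1})y=-\int_0^t e^{-sA}Ay\,ds$ requires $y\in{\rm dom}(A)$ and, applied to $y=e^{-tB}x$, would produce the integrand $e^{-sA}A\,e^{-tB}x$, not $e^{-sA}e^{-tB}Ax$; since $A$ and $e^{-tB}$ do not commute these differ, and moreover $e^{-tB}x$ need not lie in ${\rm dom}(A)$ at all, so the formula is not even applicable. Your diagnosis of the cross term $\tfrac1t(e^{-tA}-\mathds{1})(e^{-tB}-\mathds{1})x$ is accurate --- it genuinely cannot be controlled by the crude bound $\|\tfrac1t(e^{-tA}-\mathds{1})\|\le 2/t$ --- but the cure is not the integral identity you wrote.

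The fix is to choose the other factorization, which keeps every difference quotient acting on $x$ itself: write
\begin{equation*}
e^{-tA}e^{-tB}-\mathds{1} \;=\; e^{-tA}\bigl(e^{-tB}-\mathds{1}\bigr) \,+\, \bigl(e^{-tA}-\mathds{1}\bigr) \, .
\end{equation*}
Then for $x\in D\subset{\rm dom}(A)\cap{\rm dom}(B)$ the second summand divided by $t$ converges to $-Ax$ directly, while for the first,
\begin{equation*}
\Bigl\|e^{-tA}\tfrac1t\bigl(e^{-tB}-\mathds{1}\bigr)x+Bx\Bigr\|
\le \Bigl\|\tfrac1t\bigl(e^{-tB}-\mathds{1}\bigr)x+Bx\Bigr\|
+\bigl\|\bigl(e^{-tA}-\mathds{1}\bigr)Bx\bigr\| \longrightarrow 0 \, ,
\end{equation*}
using only $\|e^{-tA}\|\le 1$ and strong continuity at the fixed vector $Bx$. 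This gives $\Phi'(+0)x=-(A+B)x=-Cx$ on $D$ with no domain issues and no commutation, after which your third step (the invocation of Theorem~\ref{th:1.8.24}) goes through as you describe and coincides with the paper's proof.
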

\begin{proof}
Let us define the contraction $\mathbb{R}_{0}^{+}\ni t \mapsto \Phi(t)$, $\Phi(0)= \mathds{1}$, by
\begin{equation}\label{eq:1.8.64}
\Phi(t) := e^{- t A} e^{- t B} \ .
\end{equation}
Note that if $x \in D$, then derivative
\begin{equation}\label{eq:1.8.65}
\Phi'(+0)x = \lim_{t\rightarrow +0} \frac{1}{t} ( \Phi(t)-\mathds{1})\ x = -(A + B)\ x \ .
\end{equation}
Now we are in position to apply Theorem \ref{th:1.8.24}. This yields (\ref{eq:1.8.63}) for
generator $C := \overline{(A + B)}$.
\end{proof}
\begin{corollary}\label{cor:1.8.26}
Extensions of the strongly convergent Lie-Trotter product formula of
Proposition {\emph{\ref{prop:1.8.25}}} to quasi-bounded and holomorphic semigroups follows
through verbatim.
\end{corollary}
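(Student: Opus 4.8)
The plan is to reduce the quasi-bounded (and the holomorphic) setting to the contractive case of Proposition~\ref{prop:1.8.25} by an exponential rescaling of the time parameter, so that the proof of Proposition~\ref{prop:1.8.25} goes through word for word---which is precisely the meaning of ``verbatim''. Assume accordingly that $A$, $B$ and $C$ generate $C_0$-semigroups with $\|U_A(t)\|\le e^{\omega_A t}$, $\|U_B(t)\|\le e^{\omega_B t}$, $\|U_C(t)\|\le e^{\omega_C t}$, that $Cx=Ax+Bx$ on $D=\mathrm{core}(C)$ as in (\ref{eq:1.8.62}), and put $\omega:=\max\{\omega_A,\omega_B,\omega_C,0\}$. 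With $\Phi(t):=e^{-tA}e^{-tB}$ as in (\ref{eq:1.8.64}) one has $\|\Phi(t)\|\le e^{(\omega_A+\omega_B)t}\le e^{2\omega t}$, so the rescaled map $\widetilde\Phi(t):=e^{-2\omega t}\Phi(t)$ takes values in contractions and $\widetilde\Phi(0)=\mathds{1}$; moreover, splitting $e^{-2\omega t}\Phi(t)-\mathds{1}=e^{-2\omega t}(\Phi(t)-\mathds{1})+(e^{-2\omega t}-1)\mathds{1}$ and using (\ref{eq:1.8.65}) shows that $\widetilde\Phi$ has the strong right-derivative $\widetilde\Phi'(+0)x=-(A+B+2\omega\mathds{1})x=-(C+2\omega\mathds{1})x$ for $x\in D$.

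The second step is to identify the semigroup attached to $\widetilde\Phi$. Since $2\omega\mathds{1}$ is bounded, the closure of $A+B+2\omega\mathds{1}$ on $D$ equals $\widetilde C:=C+2\omega\mathds{1}$, and $D=\mathrm{core}(C)=\mathrm{core}(\widetilde C)$; furthermore $\widetilde C$ generates the $C_0$-semigroup $\{e^{-2\omega t}U_C(t)\}_{t\ge 0}$, which is contractive because $\|e^{-2\omega t}U_C(t)\|\le e^{(\omega_C-2\omega)t}\le 1$ for $\omega\ge 0$. Thus the pair $(\widetilde\Phi,\widetilde C)$ satisfies all the hypotheses of Theorem~\ref{th:1.8.24}, which gives $[\widetilde\Phi(t/n)]^n x\to e^{-t\widetilde C}x=e^{-2\omega t}U_C(t)x$ for every $x\in\mathfrak{X}$, uniformly on compact $t$-intervals. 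Since $[\widetilde\Phi(t/n)]^n=(e^{-2\omega t/n})^n[\Phi(t/n)]^n=e^{-2\omega t}[\Phi(t/n)]^n$, cancelling the nonzero scalar $e^{-2\omega t}$ recovers the Lie-Trotter formula $(e^{-tA/n}e^{-tB/n})^n x\to e^{-tC}x$ of (\ref{eq:1.8.63}); the passage from $D$ to all of $\mathfrak{X}$ is, as in the proof of Theorem~\ref{th:1.8.24}, by density of $D$ together with the uniform bound $\|[\widetilde\Phi(t/n)]^n\|\le 1$. For holomorphic semigroups there is genuinely nothing to add: a holomorphic $C_0$-semigroup is a fortiori a $C_0$-semigroup, hence quasi-bounded, so it is subsumed in the reduction above---holomorphy is neither used nor an obstacle at the level of strong convergence, being exploited only in Sections~\ref{sec:4}--\ref{sec:5} to upgrade the convergence to the operator-norm topology.

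The only point that deserves attention---and it is minor---is that the rescaling argument requires one to know that $\widetilde C=C+2\omega\mathds{1}$ still generates a \emph{contraction} $C_0$-semigroup and that $D$ remains a core for it; both are immediate from the standing hypotheses (quasi-contractivity of $\{U_C(t)\}_{t\ge0}$ and $D=\mathrm{core}(C)$) together with the invariance of cores under bounded perturbations. If one wished to allow genuine growth $\|U_\bullet(t)\|\le M e^{\omega t}$ with $M>1$, then $\widetilde\Phi(t)$ would only be uniformly bounded rather than contractive, and one would have to invoke in addition either the usual stability assumption (uniform boundedness of the approximants $[\Phi(t/n)]^n$ on compact $t$-intervals) or an equivalent norm on $\mathfrak{X}$ adapted to the three semigroups; since strong convergence is insensitive to the choice of an equivalent norm, the conclusion is unaffected. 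I expect this bookkeeping, rather than any new idea, to be the only real obstacle.
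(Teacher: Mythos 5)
The paper gives no proof of this corollary --- it is stated as a one-line assertion --- so there is nothing to compare against step by step; judged on its own terms, your rescaling argument is correct and is the standard way to make the word ``verbatim'' precise. For the quasi-contractive case ($M=1$) everything you write checks out: $\widetilde\Phi(t)=e^{-2\omega t}\Phi(t)$ is a contraction with $\widetilde\Phi(0)=\mathds{1}$, the splitting $\widetilde\Phi(t)-\mathds{1}=e^{-2\omega t}(\Phi(t)-\mathds{1})+(e^{-2\omega t}-1)\mathds{1}$ gives the right-derivative $-(C+2\omega\mathds{1})x$ on $D$, the bounded perturbation $2\omega\mathds{1}$ preserves cores and shifts the generator so that $e^{-t\widetilde C}=e^{-2\omega t}U_C(t)$ is contractive, and cancelling the scalar $e^{-2\omega t}$ recovers (\ref{eq:1.8.63}). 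Your treatment of the holomorphic case (nothing new is needed at the level of strong convergence; holomorphy only matters in Sections \ref{sec:4}--\ref{sec:5}) is also the right reading of the corollary. The one genuinely substantive point is your closing caveat, and you are right to flag it: for truly quasi-bounded semigroups with $M>1$ the rescaled $\widetilde\Phi$ is only uniformly bounded, and since no single equivalent norm will in general render two noncommuting semigroups simultaneously contractive, the Lie--Trotter formula in that generality requires the additional stability hypothesis $\|[\Phi(t/n)]^n\|\le M e^{\omega t}$ (as in the general Chernoff/Trotter--Kato approximation theorems); it is not automatic and does not follow ``verbatim'' from Proposition \ref{prop:1.8.25}. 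In that respect your write-up is more careful than the corollary's own phrasing.
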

\section{More Chernoff's Estimates}
\label{sec:3}
In this section we show a one more Chernoff-type estimate (see (\ref{eq:1.1.1})), which is of a
\textit{different} nature than {variational} estimate (\ref{eq:1.8.51})
($\sqrt[3]{n}$-Lemma \ref{lem:1.8.23}).
In fact, it is a kind of \textit{improvement} of the original Chernoff estimate (\ref{eq:2.1.10})
($\sqrt{n}$-Lemma \ref{lem:2.1.9}), which is still restricted to convergence in the strong
operator topology.
\begin{theorem}\label{th:1.1.1}
Let $C \in \mathcal{L}(\mathfrak{X})$ be contraction on a Banach space $\mathfrak{X}$. Then
$\{e^{t (C - \mathds{1}}\}_{t\geq 0}$ is a norm-continuous contractive semigroup on
$\mathfrak{X}$ and the following estimate
\begin{equation}\label{eq:1.1.1}
\|(C^n - e^{n(C-\mathds{1})}) \, x\| \leq \  \frac{n}{2}\ \Big(\|(C-\mathds{1})^2 \, x\| +
\frac{e^2}{3}\, \|(C-\mathds{1})^3 \, x\|\Big) \ ,
\end{equation}
holds for all $\ n \in \mathbb{N}$ and $x \in \mathfrak{X}$.
\end{theorem}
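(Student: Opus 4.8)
The plan is to start from the same series representation that underlies both the $\sqrt{n}$- and $\sqrt[3]{n}$-Lemmas, namely
\begin{equation*}
C^n - e^{n(C-\mathds{1})} = e^{-n}\sum_{m=0}^{\infty}\frac{n^m}{m!}\,(C^n - C^m),
\end{equation*}
but instead of bounding $\|(C^n-C^m)x\|$ linearly in $|m-n|$ (which loses a factor $\sqrt{n}$ through Cauchy--Schwarz), I would keep the \emph{second-order} Taylor-type information. Writing $C^n - C^m = C^{\min(n,m)}\big(C^{|n-m|}-\mathds{1}\big)$ and using the telescoping identity $C^{k}-\mathds{1}=\sum_{j=0}^{k-1}C^{j}(C-\mathds{1})$ together with a second application of the same identity, one gets an expansion of $(C^{|n-m|}-\mathds{1})x$ whose leading term is $|n-m|(C-\mathds{1})x$, whose next term is of order $|n-m|^2\,(C-\mathds{1})^2 x$, and so on. Crucially, the first-order term $|n-m|(C-\mathds{1})x$ is \emph{antisymmetric} in $m$ about $m=n$ while the Poisson weights $e^{-n}n^m/m!$ are ``centred'' at $m=n$ only up to first moment; since $\mathbb{E}(X_n-n)=0$, the first-order contribution, properly signed as $(m-n)$ rather than $|m-n|$, cancels in the sum. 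What survives is governed by the centred second and third moments of the Poisson variable $X_n$.

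The key steps, in order, are: (i) fix a signed second-order expansion, e.g. for $m\ge n$ write $C^m - C^n = C^n\big((C-\mathds{1})^{?}\big)$-type terms and symmetrically for $m<n$, arranging things so that the $O(m-n)$ piece appears with coefficient exactly $(m-n)$; (ii) substitute into the series and use $e^{-n}\sum_m \frac{n^m}{m!}(m-n)=0$ to kill the linear term; (iii) bound the remaining quadratic term by $\tfrac12\,\|(C-\mathds{1})^2 x\|$ times $e^{-n}\sum_m \frac{n^m}{m!}\binom{|m-n|}{2}$-type sums, and the cubic remainder similarly; (iv) evaluate the relevant Poisson moments: $\mathbb{E}\big((X_n-n)^2\big)=n$ gives the factor $n/2$ in front of $\|(C-\mathds{1})^2x\|$, and controlling the cubic tail produces the $\tfrac{e^2}{3}\,\|(C-\mathds{1})^3x\|$ term, where the constant $e^2/3$ should emerge from a crude but explicit bound on $e^{-n}\sum_m \frac{n^m}{m!}|m-n|^3$ or on a resolvent-type remainder, uniformly in $n\in\mathds{N}$; (v) collect terms to obtain \eqref{eq:1.1.1}. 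Throughout, the contractivity $\|C\|\le 1$ is used only to discard the harmless powers $C^{j}$, $C^{\min(n,m)}$ appearing in front of the differences.

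I expect the main obstacle to be step (iv), and specifically getting the \emph{clean} constant $n/2$ (not $n$, not $n/2 + O(1)$) in front of $\|(C-\mathds{1})^2x\|$ while simultaneously absorbing \emph{all} higher-order error into a single $\|(C-\mathds{1})^3x\|$ term with a universal constant. This forces a careful choice of how to truncate the Taylor/telescoping expansion: one must stop at exactly second order with an \emph{exact} integral-type remainder of third order, rather than a loose $|m-n|^3$ bound, since a naive third-moment bound $\mathbb{E}|X_n-n|^3 \sim n^{3/2}$ would destroy the linear-in-$n$ statement. The likely device is an integral (Newton--Leibniz) remainder for the map $t\mapsto e^{t(C-\mathds{1})}$ or for the finite product $C^k$, combined with the uniform bound $\sup_n e^{-n}\sum_m \frac{n^m}{m!}\,\frac{|m-n|^3}{n}\le \text{const}$; matching that constant to $e^2/3$ is the one genuinely delicate computation. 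Everything else — the series manipulation, the vanishing of the first moment, the passage from the signed to the final estimate — is routine once the remainder is set up correctly.
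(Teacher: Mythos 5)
Your approach is genuinely different from the paper's, and it has a gap at exactly the point you flag as delicate --- step (iv) --- but the fix you propose there does not exist. The uniform bound you hope for,
\begin{equation*}
\sup_{n\in\mathds{N}}\ e^{-n}\sum_{m=0}^{\infty}\frac{n^m}{m!}\,\frac{|m-n|^3}{n}<\infty \ ,
\end{equation*}
is false: by the central limit theorem $(X_n-n)/\sqrt{n}$ converges to a standard Gaussian with convergence of absolute moments, so $\mathbb{E}|X_n-n|^3\sim c\,n^{3/2}$ and the displayed quantity grows like $\sqrt{n}$. Since the remainder in your expansion must be estimated in \emph{norm}, only absolute (not signed) Poisson moments are available, and the third absolute central moment is irreducibly of order $n^{3/2}$, which destroys the linear-in-$n$ bound for the $\|(C-\mathds{1})^3x\|$ term. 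A secondary problem is the first-order cancellation in step (ii): the linear term carries the operator prefactor $C^{\min(m,n)}$, which depends on $m$, so $e^{-n}\sum_m\frac{n^m}{m!}(m-n)\,C^{\min(m,n)}(\mathds{1}-C)x$ does not vanish; repairing this (e.g.\ replacing $C^{\min(m,n)}$ by $C^n$ plus a correction) injects an extra second-order contribution of size $\approx\frac{n}{2}\|(\mathds{1}-C)^2x\|$, so even the quadratic constant would come out as $n$ rather than $n/2$ without further cancellation.

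The paper avoids the Poisson representation entirely. It uses the telescopic identity
\begin{equation*}
C^n-e^{n(C-\mathds{1})}=\sum_{k=0}^{n-1}C^{n-k-1}\,\bigl(C-e^{(C-\mathds{1})}\bigr)\,e^{k(C-\mathds{1})}\ ,
\end{equation*}
in which the outer factors are contractions commuting with $C$, so everything reduces to the \emph{single-step} difference applied to $x$: $C-e^{(C-\mathds{1})}=-\tfrac12(\mathds{1}-C)^2-(\mathds{1}-C)^3\sum_{m\geq 3}\tfrac{(-1)^m}{m!}(\mathds{1}-C)^{m-3}$, whose tail series has norm at most $e^{\|\mathds{1}-C\|}/6\leq e^2/6$. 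Multiplying by the $n$ terms of the telescoping sum gives the constants $n/2$ and $\tfrac{n}{2}\cdot\tfrac{e^2}{3}$ exactly, with no moment computation and no cancellation argument. If you want to salvage your route, you would have to reorganize the expansion so that the third-order remainder is weighted by $\binom{|m-n|}{2}$-type combinatorial factors (second, not third, moments) with the cubic power of $(\mathds{1}-C)$ extracted before taking norms; at that point you are essentially rediscovering the telescopic decomposition.
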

\begin{proof}
The first assertion is proven in Lemma \ref{lem:1.8.23}, see (\ref{eq:1.8.51-2}).

To prove inequality (\ref{eq:1.1.1}) we use the \textit{telescopic} representation:
\begin{equation}\label{eq:1.1.2}
C^n-e^{n(C-\mathds{1})} =
\sum_{k=0}^{n-1} \ C^{n-k-1}\, (C - e^{(C-\mathds{1})})\, e^{k(C-\mathds{1})} \ .
\end{equation}
To proceed we exploit that operator $C\in \mathcal{L}(\mathfrak{X})$ is bounded and therefore
\begin{equation}\label{eq:1.1.3}
C - e^{(C-\mathds{1})} =  - \, \frac{1}{2} \ (\mathds{1} - C)^2  -
(\mathds{1} - C)^3 \, \sum_{m=3}^{\infty} \, \frac{(-1)^m}{m!}\, (\mathds{1} - C)^{m-3} \, ,
\end{equation}
for the operator-norm convergent series. Hence, owing to  $\|C\| \leq 1$ one gets estimate
\begin{equation}\label{eq:1.1.4}
\Big\|\sum_{m=3}^{\infty} \, \frac{1}{m!}\, (\mathds{1} - C)^{m-3}\Big\| \leq
\frac{1}{6} \ e^{\|\mathds{1} - C\|} \leq \frac{e^2}{6} \, .
\end{equation}
Then on account of (\ref{eq:1.1.2}) - (\ref{eq:1.1.4}) and (\ref{eq:1.8.51-2})
we obtain inequality (\ref{eq:1.1.1}).
\end{proof}
\begin{corollary}\label{cor:1.1.1} \emph{(Chernoff product formula)}
Let $\Phi: t\mapsto\Phi(t)$ be a function from $\mathbb{R}_{0}^+$ to contractions on $\mathfrak{X}$
such that $\Phi(0) = \mathds{1}$, which satisfies conditions of Theorem \emph{\ref{th:1.8.24}}.
Then
\begin{equation}\label{eq:1.1.5}
\lim_{n \rightarrow \infty}\|([\Phi(t/n)]^n - e^{{n}(\Phi(t/n)- \mathds{1})}) \ x \| = 0 \, ,
\quad \ x\in\mathfrak{X} \, ,
\end{equation}
and as a result one gets the product formula \emph{(\ref{eq:1.8.55})}.
\end{corollary}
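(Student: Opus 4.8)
The plan is to derive Corollary \ref{cor:1.1.1} as a direct consequence of Theorem \ref{th:1.1.1}, exactly as the $\sqrt[3]{n}$-Lemma was used to prove Theorem \ref{th:1.8.24}. First I would apply estimate (\ref{eq:1.1.1}) with the choice $C := \Phi(t/n)$, which is legitimate since $\Phi(t/n)$ is a contraction on $\mathfrak{X}$ by hypothesis. This gives, for each fixed $t \in \mathbb{R}_0^+$ and every $x \in \mathfrak{X}$,
\begin{equation}\label{eq:plan-1}
\|([\Phi(t/n)]^n - e^{n(\Phi(t/n)-\mathds{1})})\, x\| \leq \frac{n}{2}\Big(\|(\mathds{1}-\Phi(t/n))^2 x\| + \frac{e^2}{3}\|(\mathds{1}-\Phi(t/n))^3 x\|\Big).
\end{equation}
The goal is then to show that the right-hand side tends to zero as $n \to \infty$, at least for $x$ in the core $D$, and afterwards extend to all of $\mathfrak{X}$ by the usual density argument.

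The key step is the asymptotic analysis of the two terms on the right of (\ref{eq:plan-1}). Writing $\mathds{1} - \Phi(t/n) = (t/n)\, A_n(t)$ with $A_n(t)$ the bounded approximants from (\ref{eq:1.8.56}), the first term becomes $\tfrac{n}{2}(t/n)^2 \|A_n(t)^2 x\| = \tfrac{t^2}{2n}\|A_n(t)^2 x\|$ and the second $\tfrac{n}{2}(t/n)^3 \tfrac{e^2}{3}\|A_n(t)^3 x\| = \tfrac{e^2 t^3}{6 n^2}\|A_n(t)^3 x\|$. For $x \in D$ with additionally $Ax \in \mathrm{dom}(A)$ (or $x$ in an appropriate invariant core), convergence $A_n(t)x \to Ax$ from (\ref{eq:1.8.57}) together with boundedness of the $A_n(t)$ gives that $\|A_n(t)^2 x\|$ and $\|A_n(t)^3 x\|$ stay bounded, so both terms are $O(1/n)$ and vanish. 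The subtlety is that (\ref{eq:1.1.1}) a priori only controls these powers acting on $x$, not the operator norm; unlike (\ref{eq:1.8.51-1}) there is no uniform operator-norm bound, so I cannot directly invoke a density/equicontinuity extension from (\ref{eq:1.1.1}) alone.

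To circumvent this, I would \emph{not} try to prove (\ref{eq:1.1.5}) from (\ref{eq:1.1.1}) directly for all $x$, but rather combine it with the uniform a priori bound $\|[\Phi(t/n)]^n x - e^{-tA_n(t)}x\| \leq 2\|x\|$ already noted in the proof of Theorem \ref{th:1.8.24}. Since $D = \mathrm{core}(A)$ is dense, and since on bounded subsets of $\mathcal{L}(\mathfrak{X})$ pointwise convergence on a dense set coincides with the strong operator topology (Ref.~\citenum{Kato95}, Chapter III, Lemma 3.5), establishing (\ref{eq:1.1.5}) on the dense set $D$ suffices to extend it to all $x \in \mathfrak{X}$. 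Then, just as (\ref{eq:1.8.58}) and (\ref{eq:1.8.61}) were combined in Theorem \ref{th:1.8.24}, the limit (\ref{eq:1.1.5}) together with (\ref{eq:1.8.58}) yields the Chernoff product formula (\ref{eq:1.8.55}).

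The main obstacle I anticipate is the regularity required to make $\|A_n(t)^2 x\|$ and $\|A_n(t)^3 x\|$ bounded in $n$: plain membership $x \in D$ only guarantees $A_n(t)x \to Ax$, and applying a second or third copy of $A_n(t)$ requires either that $Ax$ again lies in a set where $A_n(t)$ converges, or a separate uniform bound. The cleanest fix is to run the argument on a suitable dense subset $D_0 \subseteq D$ that is invariant enough — for instance $D_0$ the linear span of $\{R(\lambda)^2 y : y \in \mathfrak{X},\ \lambda > 0\}$ where $R(\lambda) = (\lambda + A)^{-1}$, which is a core contained in $\mathrm{dom}(A^2)$ — so that $\|A_n(t)^k x\|$ is manifestly $O(1)$ for $k = 2, 3$ on $D_0$. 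One should then remark that density of $D_0$ plus the $2\|x\|$ bound again upgrades (\ref{eq:1.1.5}) to all of $\mathfrak{X}$, closing the gap. Apart from this bookkeeping, every ingredient — the estimate (\ref{eq:1.1.1}), convergence (\ref{eq:1.8.57})–(\ref{eq:1.8.58}), and the density/equicontinuity principle — is already available from the preceding results.
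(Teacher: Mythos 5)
Your overall route is the paper's: apply Theorem \ref{th:1.1.1} with $C=\Phi(t/n)$, rewrite the right-hand side in terms of $A_n(t)=(t/n)^{-1}(\mathds{1}-\Phi(t/n))$, make it vanish on a dense core, and then extend to all of $\mathfrak{X}$ by the uniform $2\|x\|$ bound plus Kato's lemma on pointwise versus strong convergence, before invoking the Trotter--Neveu--Kato limit (\ref{eq:1.8.58}). The paper even performs the reduction you mention in passing, bounding the cubic term by the quadratic one via $\|\mathds{1}-\Phi(t/n)\|\le 2$, which is why its (\ref{eq:1.1.6}) carries the factor $2e^2/3$ in front of a second power rather than $e^2/3$ in front of a third.

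The one step that does not hold as you wrote it is the claim that ``convergence $A_n(t)x\to Ax$ together with boundedness of the $A_n(t)$ gives that $\|A_n(t)^2x\|$ and $\|A_n(t)^3x\|$ stay bounded.'' Each $A_n(t)$ is bounded, but only with $\|A_n(t)\|\le 2n/t$, which diverges. Writing $A_n(t)^2x=A_n(t)(Ax)+A_n(t)\bigl(A_n(t)x-Ax\bigr)$, the first term converges when $Ax$ lies in the set where (\ref{eq:1.8.57}) holds, but the second is controlled only by $(2n/t)\,\|A_n(t)x-Ax\|$, and strong convergence supplies no rate. Consequently your proposed repair --- shrinking the core to a $D_0\subset\mathrm{dom}(A^2)$ built from $R(\lambda)^2$ --- fixes the domain bookkeeping but not the estimate: restricting to a dense subspace of $\mathrm{dom}(A^2)$ does not by itself make $\|A_n(t)^2x\|$ bounded; what is really needed is a second-order condition on $\Phi$ at $t=0$, i.e.\ convergence (or at least boundedness) of $(t/n)^{-2}(\mathds{1}-\Phi(t/n))^2x$, which is not implied by the first-order hypothesis (\ref{eq:1.8.55-0}). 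To be fair, the paper is terse at exactly this point: it asserts (\ref{eq:1.1.8}), namely $A_n(t)^2x\to A^2x$ on $D$, as a consequence of (\ref{eq:1.1.7}), which is the same leap. So you have correctly located the delicate step, but your resolution does not close it; as it stands, deriving (\ref{eq:1.1.5}) from Theorem \ref{th:1.1.1} needs this additional second-order control on $\Phi$, a requirement the $\sqrt[3]{n}$-route of Theorem \ref{th:1.8.24} avoids.
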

\begin{proof}
On account of Theorem \ref{th:1.1.1} we obtain by (\ref{eq:1.1.1}) the estimate
\begin{eqnarray}\label{eq:1.1.6}
&&\|([\Phi(t/n)]^n - e^{{n}(\Phi(t/n)- \mathds{1})}) \ x \| \leq \\
&&\frac{t^2}{2 n}\ \Big(\Big\|\frac{n^2}{t^2}\, (\mathds{1} - \Phi(t/n))^2 \, x\Big\| +
\frac{2\, e^2}{3} \ \Big\|\frac{n^2}{t^2}(\mathds{1} - \Phi(t/n))^2 \, x\Big\|\Big)
, \quad \ x\in\mathfrak{X} \, . \nonumber
\end{eqnarray}
Note that by (\ref{eq:1.8.57}) for any $t \in {\mathds{R}}^{+}$ we have on the dense set
$D = {\rm{core}}(A)$:
\begin{equation}\label{eq:1.1.7}
\lim_{n \rightarrow \infty} \frac{n}{t}\, (\mathds{1} - \Phi(t/n)) \, x = A \, x \ ,
\quad x \in D \, .
\end{equation}
Given that generator $A$ of contractive $C_0$-semigroup is \textit{accretive},
the range of resolvent: ${\rm{ran }}((A + \zeta \mathds{1})^{-1}) = \mathfrak{X}$,
for ${\rm{Re}}(\zeta) > 0$. As a consequence Ref.\citenum{Kato95} (Chapter III, Problem 2.9
and Chapter IX, \S1.2), domain ${\rm{dom}}(A^2) \subset \rm{dom}(A)$ is \textit{dense} in
$\mathfrak{X}$ and limit (\ref{eq:1.1.7}) provides
\begin{equation}\label{eq:1.1.8}
\lim_{n \rightarrow \infty} (A_{n}(t))^2 \, x = A^2 \, x  \ ,
\quad x \in D \subset {\rm{dom}}(A^2) \, ,
\end{equation}
where $A_{n}(t) := ({t}/n)^{-1}\, (\mathds{1} - \Phi(t/n))$, cf. (\ref{eq:1.8.56}),
and $D = {\rm{core}}(A)$.

By virtue of estimate (\ref{eq:1.1.6}) and  (\ref{eq:1.1.8}) we obtain
\begin{equation}\label{eq:1.1.9}
\lim_{n \rightarrow \infty} \|([\Phi(t/n)]^n - e^{{n}(\Phi(t/n)- \mathds{1})}) \ x \| = 0
 \ , \quad x \in D \, .
\end{equation}
Then similarly to concluding arguments in Theorem {\ref{th:1.8.24}}, saying that on the
\textit{bounded} subsets of $\mathcal{L}(\mathfrak{X})$ the topology of \textit{point-wise}
convergence on a \textit{dense} subset $D \subset\mathfrak{X}$ coincides with the \textit{strong}
operator topology, the limit (\ref{eq:1.1.9}) can be extended
to $x \in \mathfrak{X}$.

Now, given that $D = {\rm{core}}(A)$, by virtue of the \textit{Trotter-Neveu-Kato} theorem we
obtain the limit (\ref{eq:1.8.58}), and owing to (\ref{eq:1.1.9}) for $x \in \mathfrak{X}$, we
deduce the \textit{Chernoff product formula} (\ref{eq:1.8.55}).
\end{proof}
\begin{remark}\label{rem:1.1.0}
Resuming the Chernoff $\sqrt{n}$-estimate (\ref{eq:2.1.10}), and its varieties: (\ref{eq:1.8.51})
and (\ref{eq:1.1.1}), we conclude that due to the terms with difference $\|(C-\mathds{1})\ x\|$
all of them contr\^{o}l only the \textit{strong} convergence of the product formulae.
By definition (\ref{eq:1.8.56}) the \textit{rates}: $R_n(t)$, of these converges
\textit{conditioned} to $x \in D$ have the following asymptotic form for $t>0$ and large
$n \in \mathds{N}$:\\
(a) For (\ref{eq:2.1.10}): $R_n(t) =  \| A_{n}(t)\, x\|/\sqrt{n} \,$. \\
(b) For (\ref{eq:1.8.51}): $R_n(t) = \| A_{n}(t)\, x\|^{2/3}/\sqrt[3]{n} \, $. \\
(c) For (\ref{eq:1.1.1}): $R_n(t) = \| A_{n}(t)^2 \, x\|/{n} \, $.
\end{remark}
\begin{remark}\label{rem:1.1.1}
None of these three methods has an evident straightforward extension that could ensure
the \textit{operator-norm} convergence of the Chernoff product formula \cite{Zag20}.
In the next Section \ref{sec:4} we show that only a relatively more sophisticated method (cf.(b))
based on the Tchebych\"{e}v inequality (Section \ref{sec:2}) is, \textit{a fortiori}, sufficiently
accurate. Indeed, it allows an \textit{uplifting} of convergence the Chernoff product formula to the
operator-norm topology for \textit{quasi-sectorial} contractions on a Hilbert space.
\end{remark}
\section{{Operator-Norm Chernoff Estimate}}\label{sec:4}
\begin{definition}\label{def:2.1.2}\cite{CaZ01, Zag08}
Contraction $C$ on the Hilbert space $\mathfrak{H}$ is called \textit{quasi-sectorial} for
semi-angle $\alpha\in [0, \pi/2)$ with respect to the vertex at $z=1$, if its numerical
range $W(C)\subseteq D_{\alpha}$. Here the subset of complex plane
\begin{eqnarray}\label{eq:2.1.1}
&& D_{\alpha}:= \\
&&\{z\in {\mathbb{C}}: |z|\leq \sin \alpha\} \cup
\{z\in {\mathbb{C}}: |\arg (1-z)|\leq \alpha \ {\rm{and}}\ |z-1|\leq
\cos \alpha \} .\nonumber
\end{eqnarray}
\end{definition}

We comment that $D_{\alpha} \subset D_{\pi/2} = {\mathbb{D}}$ (unit disc) and recall
that a \textit{genera}l contraction $C$ satisfies a weaker condition: $W(C)\subseteq {\mathbb{D}}$.

Note that if operator $C$ is a quasi-sectorial contraction, then $\mathds{1}- C$ is an
$m$-sectorial operator with vertex $z=0$ and semi-angle $\alpha$. Consequently, the numerical
range: $W(\mathds{1}- C)\subset \overline{S}_{\alpha}\, $, for the closure of sector
\begin{equation*}
{S}_{\alpha} := \{z\in {\mathbb{C}\setminus \{0\}}: |\arg z| < \alpha \} \, .
\end{equation*}
Then for operator $C$ the limits: $\alpha=0$ and $\alpha = \pi/2$,
correspond respectively to self-adjoint and to contraction operators, whereas for $\mathds{1}- C$
they give a non-negative self-adjoint and an $m$-accretive (bounded) operators.

The \textit{resolvent} of an $m$-sectorial operator $A$, with {semi-angle} $\alpha\in [0, \alpha_0]$,
for some $\alpha_0 < \pi/2$ and vertex at $z=0$, that is, for $W(A)\subset \overline{S}_{\alpha}$,
provides the first non-trivial example of a quasi-sectorial contraction. Indeed, the following
assertion holds.
%
\begin{proposition}\label{prop:2.1.3}
Let $A$ be $m$-sectorial operator with {semi-angle} $\alpha\in [0, \pi/4]$ and vertex at $z=0$.
Then $\{F(t):= (\mathds{1} + t A)^{-1}\}_{\, t > 0}$ is a family
of quasi-sectorial contractions, such that numerical ranges $W(F(t)) \subset D_{\alpha}$
for all $t > 0$.
\end{proposition}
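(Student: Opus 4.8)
The plan is to reduce the claim to a pointwise statement about the numerical range: it suffices to show that for every unit vector $x \in \mathfrak{H}$, the number $\langle x, F(t) x\rangle$ lies in $D_\alpha$. Write $y := (\mathds{1} + tA)^{-1} x$, so that $x = y + tAy$ and $\langle x, F(t)x\rangle = \langle y + tAy, y\rangle / \|y + tAy\|^2$ after renormalising; equivalently, setting $w := \langle y, A y\rangle$, one has (with $v := \|y\|^2$)
\begin{equation*}
z := \langle x, F(t)x\rangle = \frac{v + t\bar w}{v + 2t\,\mathrm{Re}\,w + t^2 |w|^2 / \ldots}
\end{equation*}
— more cleanly, I would not normalise but compute $1 - z$ directly: $1 - \langle x, F(t)x\rangle = t\langle x, A F(t) x\rangle = t \langle (\mathds{1}+tA)y, A y\rangle = t\,\overline{w} + t^2 \langle Ay, Ay\rangle$ where $w = \langle y, Ay \rangle \in \overline{S}_\alpha$ since $A$ is $m$-sectorial with semi-angle $\alpha$ (using $\langle Ay, y\rangle = \overline{\langle y, Ay\rangle}$, so I should be careful with the conjugate — the point is $\mathrm{Re}\,w \geq 0$ and $|\mathrm{Im}\,w| \leq \mathrm{Re}\,w \cdot \tan\alpha$).

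The key computation is then: with $a := \mathrm{Re}\,w \geq 0$, $b := \mathrm{Im}\,w$, $|b| \leq a\tan\alpha$, and $c := \|Ay\|^2 \geq 0$, the Cauchy–Schwarz-type bound $|w|^2 = a^2 + b^2 \leq \|y\|^2 \|Ay\|^2 = c$ (taking $\|y\| = 1$ WLOG after the computation is set up homogeneously) gives the crucial inequality $a^2 + b^2 \leq c$. One has $\|x\|^2 = \|y + tAy\|^2 = 1 + 2ta + t^2 c$, so for a genuinely unit vector $x$ the relevant quantity is
\begin{equation*}
1 - z = \frac{t a - itb + t^2 c}{1 + 2ta + t^2 c},
\end{equation*}
where I used $\overline{w}$ has real part $a$ and imaginary part $-b$. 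I would then verify the two defining conditions of $D_\alpha$: namely $|\arg(1-z)| \leq \alpha$ and $|1 - z| \leq \cos\alpha$ (the disc $\{|z| \leq \sin\alpha\}$ piece is a consequence, or handled separately). For the argument condition: $\arg(1-z) = \arg(ta + t^2 c - itb)$ since the denominator is positive; the real part $ta + t^2 c \geq 0$ and the imaginary part has modulus $t|b| \leq ta\tan\alpha \leq (ta + t^2c)\tan\alpha$, giving $|\arg(1-z)| \leq \alpha$. This is where the hypothesis $\alpha \leq \pi/4$ must enter — I expect it is needed precisely to keep $|\arg(1-z)| \leq \alpha$ after accounting for how the $-itb$ term interacts, or in the modulus estimate below; I'd track it carefully.

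For the modulus condition $|1-z| \leq \cos\alpha$: I need $|ta - itb + t^2c|^2 \leq \cos^2\alpha \,(1 + 2ta + t^2c)^2$. Expanding the left side, $t^2(a^2 + b^2) + 2t^3 ac + t^4 c^2 \leq t^2 c + 2t^3 ac + t^4 c^2$ using $a^2 + b^2 \leq c$, so it suffices to show $t^2 c + 2t^3 ac + t^4 c^2 \leq \cos^2\alpha\,(1 + 2ta + t^2c)^2$. The main obstacle — and the step I'd spend the most care on — is this last polynomial inequality in the three nonnegative reals $ta, t^2c$ (call them $p, q$) subject to $p^2 \leq q$ (from $a^2 \le c$) and $|b| \le p\tan\alpha$: I must check $q + 2pq + q^2 \leq \cos^2\alpha\,(1 + 2p + q)^2$, and here the constraint $p^2 \le q$ and the factor $\cos^2\alpha$ with $\alpha \le \pi/4$ (so $\cos^2\alpha \ge 1/2$) should make it go through after completing a square or an AM–GM step; I'd reduce to showing $2(q + 2pq + q^2) \le (1 + 2p+q)^2 = 1 + 4p + 2q + 4p^2 + 4pq + q^2$, i.e. $q^2 + 4p^2 + 1 + 4p \ge 2q - 2pq$... wait, that rearranges to $q^2 + q(2p - 2\cdot\text{(stuff)}) \ldots$ — the genuine check is $1 + 4p + 4p^2 + q^2 \ge 2q$, which since $4p^2 \le 4q$ would need $1 + 4p + 4q + q^2 \ge 2q$, clearly true. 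So modulo the bookkeeping with the imaginary part, the inequality is comfortably true and $\alpha \le \pi/4$ gives the needed slack; combining the two verified conditions shows $z \in D_\alpha$, hence $W(F(t)) \subseteq D_\alpha$ and $F(t)$ is a quasi-sectorial contraction for every $t > 0$.
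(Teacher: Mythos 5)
Your setup --- reducing to the pointwise statement $z=\langle x,F(t)x\rangle\in D_\alpha$ and computing $1-z=(t\overline{w}+t^{2}\|Ay\|^{2})/\|x\|^{2}$ with $w=\langle y,Ay\rangle\in\overline{S}_\alpha$ --- is sound, and your verification of $|\arg(1-z)|\le\alpha$ is correct; it is exactly the paper's observation that $W(\mathds{1}-F(t))\subseteq\overline{S}_\alpha$. The gap is in the endgame. $D_\alpha$ is a \emph{union} of the disc $\{|z|\le\sin\alpha\}$ and the truncated sector at $z=1$, not the conjunction of the two conditions you set out to verify: a numerical-range point of $F(t)$ need \emph{not} satisfy $|1-z|\le\cos\alpha$. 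Concretely, take $\alpha=\pi/4$, $A\ge0$ self-adjoint with a large eigenvalue $\lambda$, and $t=1$: then $z=1/(1+\lambda)\in W(F(1))$ has $|1-z|=\lambda/(1+\lambda)\to1>\cos(\pi/4)$, and such $z$ lies in $D_{\pi/4}$ only through the disc piece $|z|\le\sin(\pi/4)$. So the modulus inequality you are trying to prove is false, and consistently your derivation of it collapses: after correct cancellation the requirement is $2q^{2}\le1+4p+4p^{2}+q^{2}$, i.e.\ $q\le1+2p$, i.e.\ $t^{2}\|Ay\|^{2}\le1+2t\,\mathrm{Re}\langle Ay,y\rangle$ (you wrote $2q$ where $2q^{2}$ belongs), which fails in the same example. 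The parenthetical ``the disc piece is a consequence, or handled separately'' is where the real work hides: one must show that every numerical-range point violating $|1-z|\le\cos\alpha$ lands in the disc, and this dichotomy is precisely where $\alpha\le\pi/4$ enters.

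The paper avoids this by proving two clean sector containments, $W(F(t))\subseteq\overline{S}_\alpha$ (from $\langle x,F(t)x\rangle=\|y\|^{2}+t\langle Ay,y\rangle$) and $W(F(t))\subseteq1-\overline{S}_\alpha$ (your argument bound), and then invoking the purely geometric fact that the rhombus $\overline{S}_\alpha\cap(1-\overline{S}_\alpha)$ is contained in $D_\alpha$ exactly when $\tan\alpha\le1$: the switch-over point $\sin\alpha\,e^{i\alpha}$ between the disc and the sector satisfies $|1-\sin\alpha\,e^{i\alpha}|\le\cos\alpha$ iff $\alpha\le\pi/4$. Supplying that geometric lemma would repair your proof. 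Separately, note that $W(F(t))\subseteq D_\alpha$ alone does not make $F(t)$ a quasi-sectorial \emph{contraction} in the sense of the paper's Definition: a numerical range inside the unit disc does not force $\|F(t)\|\le1$ (a $2\times2$ nilpotent matrix with a large entry has small numerical radius but large norm). The paper establishes $\|F(t)\|\le1$ separately from the resolvent estimate $\|(A-z\mathds{1})^{-1}\|\le\big(\mathrm{dist}(z,\overline{W(A)})\big)^{-1}$; your proposal omits this step entirely.
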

\begin{proof}
Seeing that the spectrum $\sigma(A)$ is a subset of the closure $\overline{W(A)}$ of numerical
range $W(A)\subset \overline{S}_{\alpha}$, by the estimate of resolvent:
$\|(A - z \, \mathds{1})^{-1}\|\leq \big(\mbox{dist}(z, \overline{W(A)})\big)^{-1}$ and by
$\overline{W(A)}\subseteq \overline{S}_{\alpha}$ we obtain the operator-norm bound
\begin{equation} \label{eq:2.1.2}
\|F(t)\|\leq \frac{1}{t \ {\rm{dist}}(1/t \, , - \, \overline{S}_{\alpha})} \, = \, 1 \, ,
\quad t > 0 \, .
\end{equation}
As a consequence the family of operators $\{F(t)\}_{\, t \geq 0}$ consists of contractions with
numerical ranges $W(F(t)) \subset {\mathbb{D}}$.

Next, for any $u\in \mathfrak{H}$ ($\|u\| = 1$) one gets
$(u,F(t)u)= (v_t,v_t) + t (A v_t,v_t) \in \overline{S}_{\alpha}$, where $v_t := F (t)\,u $. So, for
all $t > 0$ numerical range $W(F(t)) \subseteq \overline{S}_{\alpha}$. Similarly, one finds
that $(u,(\mathds{1}-F(t))u) = t (v_t , A v_t) + t^2 (A v_t , A v_t) \in \overline{S}_{\alpha}$,
that is, $W(\mathds{1}-F(t)) \subseteq \overline{S}_{\alpha}$, or
$W(F (t)) \subseteq (1 - \overline{S}_{\alpha})$. Then for all $t > 0$:
\begin{equation*}
W(F (t)) \subseteq (\overline{S}_{\alpha} \cap (1 - \overline{S}_{\alpha})) \subset \mathbb{D} .
\end{equation*}
Moreover, by Definition \ref{def:2.1.2} the condition $\alpha \leq \pi/4$ yields that
$(\overline{S}_{\alpha} \cap (1 - \overline{S}_{\alpha}))\subset D_{\alpha}$. Hence, for
$\alpha\in [0, \pi/4]$ the operators $\{F(t)\}_{\, t \geq 0}$ are quasi-sectorial contractions
with numerical range in $D_{\alpha}$.
\end{proof}

Note that the upper bound $\alpha \leq \pi/4$ is stemming from Definition \ref{def:2.1.2}
and observation that
$(\overline{S}_{\alpha} \cap (1 - \overline{S}_{\alpha}))\nsubseteq D_{\alpha}$
for $\alpha > \pi/4$, cf. (\ref{eq:2.1.1}).
\begin{corollary}\label{cor:2.1.1}
Let $A$ be an {$m$-sectorial} operator with  semi-angle $\alpha\in[0, \pi/4]$ and with  vertex at
$z=0$. Then $\{e^{-t \, A}\}_{\, t \geq 0}$ is a holomorphic quasi-sectorial contraction semigroup
with numerical ranges $W(e^{-t \, A}) \subset D_{\alpha}$ for all $t > 0$ and one has the
strongly convergent Euler limit\emph{:}
\begin{equation}\label{eq:2.1.4}
s-\lim_{n\rightarrow\infty}(\mathds{1} + t A)^{-n} = e^{-t A} \, ,  \quad t\geq 0 \, .
\end{equation}
\end{corollary}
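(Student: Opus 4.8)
The plan is to derive Corollary~\ref{cor:2.1.1} entirely from Proposition~\ref{prop:2.1.3} together with classical facts about $m$-sectorial operators and holomorphic semigroups. First I would recall that an $m$-sectorial operator $A$ with semi-angle $\alpha < \pi/2$ and vertex at $z=0$ is the generator of a bounded holomorphic $C_0$-semigroup $\{e^{-tA}\}_{t\geq 0}$; this is standard (see e.g. Kato, Ref.~\citenum{Kato95}, Chapter~IX). Moreover, since $W(A)\subset\overline{S}_\alpha$, the semigroup is contractive, and the spectral mapping / numerical range behaviour of holomorphic semigroups forces $W(e^{-tA})$ to lie in the convex hull of $\exp(-t\,\overline{S}_\alpha)$ near $z=1$; combined with the elementary fact that for small $\alpha$ this image stays inside $D_\alpha$ — exactly the region singled out in Definition~\ref{def:2.1.2} — one concludes $W(e^{-tA})\subset D_\alpha$ for all $t>0$. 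The cleanest route for this last point is not to compute the exponential image directly, but to use the resolvent representation: $e^{-tA}$ is a strong limit of products of resolvents $(\mathds{1}+sA)^{-1}$, each of which has numerical range in $D_\alpha$ by Proposition~\ref{prop:2.1.3}, and $D_\alpha$ is convex and closed, hence stable under the relevant limiting operations.

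The second, and main, part is the strongly convergent Euler formula \eqref{eq:2.1.4}. Here I would apply the Chernoff product formula, Theorem~\ref{th:1.8.24}, to the function $\Phi(s):=(\mathds{1}+sA)^{-1}=F(s)$. One must check the hypotheses: $\Phi$ maps $\mathbb{R}_0^+$ into contractions on $\mathfrak{H}$ by the bound \eqref{eq:2.1.2} of Proposition~\ref{prop:2.1.3}, $\Phi(0)=\mathds{1}$, and for $x\in\mathrm{dom}(A)$ one has the strong right-derivative
\begin{equation*}
\Phi'(+0)\,x=\lim_{s\to+0}\frac{1}{s}\big((\mathds{1}+sA)^{-1}-\mathds{1}\big)x
=\lim_{s\to+0}\big(-(\mathds{1}+sA)^{-1}A\,x\big)=-A\,x,
\end{equation*}
using strong continuity of the resolvent family at $s=0$ on $\mathrm{dom}(A)$. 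Since $\mathrm{dom}(A)$ is itself a core of $A$, Theorem~\ref{th:1.8.24} gives
\begin{equation*}
\lim_{n\to\infty}[\Phi(t/n)]^n x=\lim_{n\to\infty}(\mathds{1}+(t/n)A)^{-n}x=U_A(t)\,x=e^{-tA}x
\end{equation*}
for all $x\in\mathfrak{H}$ and all $t\geq 0$, which is precisely \eqref{eq:2.1.4}.

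The step I expect to require the most care is the identification of the Chernoff limit with $e^{-tA}$: Theorem~\ref{th:1.8.24} yields convergence to the semigroup $U_A(t)$ generated by the operator appearing in $\Phi'(+0)$, so I must make sure that this operator is exactly $A$ (not merely a restriction or an extension of it) and that $A$ generates the $C_0$-semigroup we call $e^{-tA}$. For an $m$-sectorial $A$ this is automatic — $A$ is closed, densely defined, and $-A$ generates a contraction (indeed holomorphic) $C_0$-semigroup — so the core hypothesis is satisfied with $D=\mathrm{dom}(A)$, and no closure issues arise. The claim about numerical ranges is essentially geometric and the only subtlety is to verify that $D_\alpha$, being convex and closed, absorbs strong limits of the resolvent products $(\mathds{1}+(t/n)A)^{-n}$; this follows because a strong limit of operators whose numerical ranges lie in a fixed closed convex set again has numerical range in that set. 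Assembling holomorphy of $e^{-tA}$ (from $m$-sectoriality), the numerical-range inclusion, and the Euler limit then completes the proof.
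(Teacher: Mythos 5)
Your treatment of the Euler limit (\ref{eq:2.1.4}) is sound: applying Theorem~\ref{th:1.8.24} to $\Phi(s)=(\mathds{1}+sA)^{-1}$, with the computation $\frac{1}{s}((\mathds{1}+sA)^{-1}-\mathds{1})x=-(\mathds{1}+sA)^{-1}Ax\to -Ax$ on $\mathrm{dom}(A)$, is a legitimate route; the paper's own sketch (Remark~\ref{rem:2.1.1}) instead invokes the standard Kato construction of the semigroup via the approximants $(\mathds{1}+tA/n)^{-n}$, so your use of the Chernoff theorem proved in Section~\ref{sec:2} is, if anything, more self-contained. Holomorphy of $\{e^{-tA}\}_{t\geq 0}$ for an $m$-sectorial $A$ is standard, as you say.

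The genuine gap is in the numerical-range claim $W(e^{-tA})\subset D_\alpha$. Your argument rests on the assertion that $(\mathds{1}+(t/n)A)^{-n}$ has numerical range in $D_\alpha$ because each factor $(\mathds{1}+(t/n)A)^{-1}$ does and $D_\alpha$ is closed and convex. But the numerical range is not multiplicative: $W(C)\subseteq K$ for a closed convex set $K$ does not imply $W(C^n)\subseteq K$ (only the spectrum obeys a mapping theorem in general), so convexity of $D_\alpha$ handles the passage to the strong limit but cannot produce the inclusion for the powers in the first place. Your alternative suggestion via a ``spectral mapping / numerical range behaviour'' of the exponential suffers from the same defect. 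This is exactly the point the paper flags in Remark~\ref{rem:2.1.1}: the \emph{inheritance} of $W(\cdot)\subset D_\alpha$ by the approximants $(\mathds{1}+tA/n)^{-n}$ and by the limit $e^{-tA}$ demands additional reasoning, carried out in Ref.~\citenum{Zag08} and resting on the Kato numerical range mapping theorem of Ref.~\citenum{Kato65}, which gives the (nontrivial) conditions under which the numerical range of a function of $C$ is controlled by the image of $W(C)$. Without that ingredient the inclusion $W(e^{-tA})\subset D_\alpha$ remains unproved in your proposal.
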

\begin{remark}\label{rem:2.1.1} (Sketch of the proof.)
We comment that holomorphic property of $\{e^{- z \, A}\}_{\, z \in {S}_{\pi/2 -\alpha}}$
follows from conditions on generator $A$. Since $A$ is {$m$-sectorial} with  vertex at
$z=0$, it is \textit{a fortiori} accretive. Then by standard arguments for construction
of $C_0$-semigroups (see Ref.\citenum{Kato95}, Chapter IX) yield, due to (\ref{eq:2.1.2})
for approximants $\{(\mathds{1} + t\, A/n)^{-n}\}_{t \geq 0, \, n\in \mathbb{N}}\, $,
the strongly convergent \textit{Euler} formula (\ref{eq:2.1.4}).
Note that although by Proposition \ref{prop:2.1.3} the family $\{(\mathds{1} + t\, A)^{-1}\}_{t > 0}$
for $\alpha\in[0, \pi/4]$ consists of quasi-sectorial contractions with numerical ranges in
$D_{\alpha}$, a proof of the claim about \textit{inheritance} of this property by approximants
$\{(\mathds{1} + t\, A/n)^{-n}\}_{t > 0, n \in \mathbb{N}}$ and
by the limit $\{e^{- t \, A}\}_{t > 0}$ demands additional reasoning \cite{Zag08}.
It is heavily based on the \textit{Kato numerical range mapping} theorem Ref.\citenum{Kato65}.
\end{remark}

We also note that \textit{extension} of Corollary \ref{cor:2.1.1} to semi-angle
$\alpha \in [0, \pi/2)$ needs merely a more refined arguments, which were developed
in Ref.\citenum{ArZ10}.
\begin{proposition}\label{prop:2.1.10}
Let operator $C$ on a Hilbert space $\mathfrak{H}$ be a quasi-sectorial contraction with
semi-angle $0\leq\alpha < \pi/2$. Then for $\alpha< \alpha'< \pi/2$
\begin{equation}\label{eq:2.1.14}
\|C^n (\mathds{1}-C)\|\leq \frac{K_{\alpha, \alpha'}}{n+1} \ , \ n\in{\mathbb{N}} \ ,
\end{equation}
where $K_{\alpha, \alpha'}$ is given by \emph{(\ref{eq:6.2.4})}.
\end{proposition}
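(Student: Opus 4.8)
The plan (the functional-calculus argument of \cite{CaZ01}) is to represent $C^{n}(\mathds{1}-C)$ by the Riesz--Dunford calculus and then to estimate the resulting contour integral, exploiting that the entire function $z\mapsto z^{n}(1-z)$ is small exactly where the contour is forced against $\sigma(C)$. Since $C$ is a quasi-sectorial contraction, $\overline{W(C)}\subseteq\overline{D_{\alpha}}\subseteq 1-\overline{S}_{\alpha}$, and on a Hilbert space $\|(z\mathds{1}-C)^{-1}\|\le 1/{\rm dist}(z,\overline{W(C)})\le 1/{\rm dist}(z,\overline{D_{\alpha}})$ whenever $z\notin\overline{D_{\alpha}}$. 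Fix $\alpha<\alpha'<\pi/2$ and take as contour $\Gamma:=\partial D_{\alpha'}$, positively oriented; by \eqref{eq:2.1.1} it consists of the two rectilinear segments $z=1-s\,e^{\pm i\alpha'}$, $s\in[0,\cos\alpha']$, issuing from the vertex $z=1$, closed up by the major arc of $\{|z|=\sin\alpha'\}$. Since $\overline{D_{\alpha}}$ lies in the interior of $D_{\alpha'}$ except for the common vertex $z=1$, the curve $\Gamma$ encircles $\overline{D_{\alpha}}\supseteq\sigma(C)$ and, away from $z=1$, lies in the resolvent set of $C$.

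Because the vertex $z=1$ may belong to $\sigma(C)$, a regularisation is needed to justify the representation
\[
C^{n}(\mathds{1}-C)=\frac{1}{2\pi i}\oint_{\Gamma}z^{n}(1-z)\,(z\mathds{1}-C)^{-1}\,dz .
\]
Replace $C$ by $rC$, $0<r<1$: since $D_{\alpha}$ is convex and contains $0$, it is star-shaped about the origin, so $r\overline{D_{\alpha}}\subseteq\overline{D_{\alpha}}$; and since $\overline{D_{\alpha}}\subseteq\{|z|\le1\}$ we get ${\rm dist}(r\overline{D_{\alpha}},1)\ge 1-r>0$. Hence $\Gamma$ lies entirely in the resolvent set of $rC$, Riesz--Dunford applies to $(rC)^{n}(\mathds{1}-rC)$, and letting $r\uparrow1$ --- the integrands on $\Gamma$ being bounded uniformly in $r$ (the only delicate point being $z=1$, where $|1-z|\,\|(z\mathds{1}-rC)^{-1}\|$ stays bounded, see below) and convergent pointwise off $z=1$ --- yields the displayed identity. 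Consequently
\[
\|C^{n}(\mathds{1}-C)\|\ \le\ \frac{1}{2\pi}\oint_{\Gamma}\frac{|z|^{n}\,|1-z|}{{\rm dist}(z,\overline{D_{\alpha}})}\,|dz| .
\]

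Now split $\Gamma$. Because $\overline{D_{\alpha}}\subseteq 1-\overline{S}_{\alpha}$ (the closed sector of half-angle $\alpha$ with vertex $z=1$), on the two segments $z=1-s\,e^{\pm i\alpha'}$ one has $|1-z|=s$ and ${\rm dist}(z,\overline{D_{\alpha}})\ge s\sin(\alpha'-\alpha)$, so $|1-z|/{\rm dist}(z,\overline{D_{\alpha}})\le 1/\sin(\alpha'-\alpha)$; moreover $|z|^{2}=1-2s\cos\alpha'+s^{2}\le 1-s\cos\alpha'\le e^{-s\cos\alpha'}$ for $s\in[0,\cos\alpha']$, whence $|z|^{n}\le e^{-ns\cos\alpha'/2}$ and the two segments together contribute at most
\[
\frac{1}{\pi\sin(\alpha'-\alpha)}\int_{0}^{\cos\alpha'}e^{-ns\cos\alpha'/2}\,ds\ \le\ \frac{2}{\pi\,\cos\alpha'\,\sin(\alpha'-\alpha)}\cdot\frac1n .
\]
On the major arc, $|z|=\sin\alpha'<1$ and ${\rm dist}(z,\overline{D_{\alpha}})\ge c_{1}>0$ by compactness, so that part contributes $O((\sin\alpha')^{n})$. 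Adding the two pieces and using $1/n\le 2/(n+1)$ for $n\ge1$ yields $\|C^{n}(\mathds{1}-C)\|\le K_{\alpha,\alpha'}/(n+1)$ with an explicit constant $K_{\alpha,\alpha'}$ depending only on $\alpha$ and $\alpha'$ (cf. \eqref{eq:6.2.4}).

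The step that requires the most care is the passage to the contour representation: $\Gamma$ is unavoidably pinched against $\sigma(C)$ at $z=1$, so Riesz--Dunford with $\Gamma$ contained in the resolvent set is not directly at hand. The cure --- the dilation $C\mapsto rC$, $r\uparrow1$, legitimate because $D_{\alpha}$ is convex, contains $0$, and satisfies $\overline{D_{\alpha}}\subseteq\{|z|\le1\}$ --- together with the cancellation between the first-order zero of $1-z$ at $z=1$ and the possible resolvent blow-up there, is precisely what makes the $1/(n+1)$ rate emerge. The remaining estimates (the lower bound ${\rm dist}(z,\overline{D_{\alpha}})\ge s\sin(\alpha'-\alpha)$ on the segments and $|z|^{n}\le e^{-ns\cos\alpha'/2}$) are routine, but must be carried through with the correct constants to produce the explicit $K_{\alpha,\alpha'}$.
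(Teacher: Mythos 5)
Your proof is correct and follows essentially the same route as the paper: the Riesz--Dunford representation of $C^{n}(\mathds{1}-C)$ over $\partial D_{\alpha'}$, the resolvent bound $\|(z\mathds{1}-C)^{-1}\|\le \mathrm{dist}(z,D_{\alpha})^{-1}$, and the split of the contour into the arc (contributing a geometrically small term) and the two segments issuing from $z=1$ (which yield the $1/(n+1)$ decay). The one point where you go beyond the paper is the dilation $C\mapsto rC$, $r\uparrow 1$, used to justify the contour representation when $1\in\sigma(C)$ --- a detail the paper's proof passes over silently --- and that regularisation is sound, since $D_{\alpha}$ is star-shaped about $0$ and $r\overline{D_{\alpha}}$ stays at distance $\ge 1-r$ from the pinch point $z=1$.
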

\begin{proof}
Since operator $C$ is a quasi-sectorial contraction, the spectrum $\sigma(C)$ is a subset of
closure $\overline{W(C)}$ of the numerical range $W(C) \subset D_{\alpha}$. So, taking
$\alpha< \alpha'< \pi/2$ one gets by definition (\ref{eq:2.1.1}): $D_{\alpha'} \supset D_{\alpha}$.
Hence, contour $\partial D_{\alpha'}$ is outside of $D_\alpha$, but inside the unit disc
${\mathbb{D}}$, and all of them have only one common point $z = 1$. Then the
\textit{Riesz-Dunford} functional calculus provides the following representation of operator
in (\ref{eq:2.1.14}):
\begin{equation}\label{eq:6.2.2}
C^n (\mathds{1}-C) = \frac{1}{2\pi i} \int_{\partial D_{\alpha'}}  dz \
\frac{z^n \, (1-z)}{z \, \mathds{1}-C} \ .
\end{equation}
If $z$ belongs to resolvent set $\rho(C)$ of $C$, then evaluation of the norm of resolvent:
$\|(C - z\mathds{1})^{-1}\|\leq \mbox{dist}(z, \overline{W(C)})^{-1}$,
yields $\|(z\mathds{1}-C)^{-1}\|\leq \mbox{dist}(z,D_\alpha)^{-1}$ for
$z\in \partial D_{\alpha'} \subset \rho(C)$ in (\ref{eq:6.2.2}).
We consider the following parametrisation of the positively oriented contour
$\partial D_{\alpha'}$:

- for the arc$(A,B)$ with end-points at $A = e^{i (\pi/2-\alpha')}\sin\alpha'$
and at $B= e^{i (3\pi/2 + \alpha')}\sin\alpha'$, we take
$z(t)= e^{it}\sin\alpha'$ with $\pi/2-\alpha'\leq t\leq 3\pi/2+\alpha'$ ;

- for the straight lines $(1,A)$ and $(B,1)$, we take correspondingly
$z_{-}(s)=1 - s e^{- i\alpha'}$ with $s \in [0, \cos\alpha']$ and
$z_{+}(s)=1 - s e^{+ i\alpha'}$ with $s \in [\cos\alpha', 0]$.

As a consequence, by definition of the (\textit{shortest}) distance \textit{from}
$z \in \partial D_{\alpha'}$ \textit{to} $D_\alpha$, denoted as $\mbox{dist}(z,D_\alpha)$,
we obtain:

- $\|(z\mathds{1}-C)^{-1}\| \leq (\cos\alpha'\sin(\alpha' - \alpha))^{-1}$ for
$|\arg z|\geq\pi/2-\alpha'$, where we used that
$\mbox{dist}(z , D_\alpha)\in [\cos\alpha'\sin(\alpha' - \alpha), \,
(\sin\alpha' - \sin\alpha)]$ for $z \in {\rm{arc}}(A,B)$, that is, for
$z\in \{e^{it}\sin\alpha'\}_{t \in [\pi/2-\alpha', \ 3\pi/2+\alpha']}$ ;

- $\|(z\mathds{1}-C)^{-1}\| \leq (|1-z|\sin(\alpha'-\alpha))^{-1}$ for
$|\arg z|\leq \pi/2-\alpha'$, that is, for
$z\in \{1 - s e^{\mp i\alpha'}\}_{s\in [0,\ \cos\alpha']}$.

Then operator-norm estimate of the left-hand side in representation (\ref{eq:6.2.2}) takes
the form
\begin{eqnarray}
\|C^n (\mathds{1}-C)\| & \leq & \frac{1}{2\pi} \int_{\frac{\pi}{2}-\alpha'}^{{3\pi}/{2}+
\alpha'}\!\!  dt \ {|\sin\alpha'|^{n+1} |1- e^{it}\sin\alpha'|\over \cos\alpha'\sin(\alpha'-
\alpha)} + \nonumber\\
& & + {1\over\pi} \int_0^{\cos\alpha'}  ds \ {|(1- e^{i\alpha'}s)^n  e^{i\alpha'}s|\over
s \, \sin(\alpha'-\alpha)} \label{eq:6.2.3}\\
&\hspace{-4cm} \leq & \hspace{-2cm}
{2(\sin\alpha')^{n+1}\over\cos\alpha'\sin(\alpha'- \alpha)} +
\int_0^{\cos\alpha'} ds \ {\left((1-s\cos\alpha')^2 + s^2(\sin\alpha')^2\right)^{n/2}
\over\pi\sin(\alpha'-\alpha)} \ .\nonumber
\end{eqnarray}
Taking into account convexity of the mapping: $s \mapsto (1-s\cos\alpha')^2 + s^2(\sin\alpha')^2$,
for $s\in [0, \cos\alpha']$, one gets that
\begin{equation*}
(1-s\cos\alpha')^2 + s^2(\sin\alpha')^2\leq 1-s\cos\alpha' \, ,
\end{equation*}
which leads to inequality:
\begin{eqnarray*}
&&\int_0^{\cos\alpha'}ds \, \left((1-s\cos\alpha')^2 + s^2(\sin\alpha')^2\right)^{n/2}
 \leq  \int_0^{\cos\alpha'}ds \ (1-s\cos\alpha')^{n/2} \\
&& = \int_{(\sin\alpha')^2}^1 du \  {u^{n/2}\over\cos\alpha'}
\ \leq \ {1-(\sin\alpha')^{n+2}\over (n/2+1)\cos\alpha'} \ .
\end{eqnarray*}
Therefore, by (\ref{eq:6.2.3}) we obtain the estimate
\begin{eqnarray}\label{eq:6.2.4-1}
&&\|C^n (\mathds{1}-C)\| \leq {2 \ (\sin\alpha')^{n+1}\over\cos\alpha' \, \sin(\alpha'-\alpha)} +
2 \ {1-(\sin\alpha')^{n+2}\over \pi \, (n+2)\, \cos\alpha' \, \sin(\alpha'-\alpha)}
\leq \nonumber \\
&& {2 \ \over(n+1)\, \cos\alpha' \, \sin(\alpha'-\alpha)}\ \left(\frac{1}{\pi} +
(n+1)\ (\sin\alpha')^{n+1}\right) \ .
\end{eqnarray}

After \textit{optimisation} of the last factor in the right-hand side of (\ref{eq:6.2.4-1})
with respect to $n \in \mathbb{N}$, we infer (\ref{eq:2.1.14}) for
\begin{equation}\label{eq:6.2.4}
K_{\alpha, \alpha'}:={2\over\cos\alpha'\sin(\alpha'-\alpha)} \
\left({1\over\pi} - {1\over e\ln(\sin\alpha')}\right) \ ,
\end{equation}
where $\alpha< \alpha'< \pi/2 \, $.
\end{proof}

The property (\ref{eq:2.1.14}) implies that the quasi-sectorial contractions belong to the class
of so-called {\textit{Ritt's} operators} \cite{Ri53}. This allows to go beyond the
$\sqrt[3]{n}\ $-Lemma \ref{lem:1.8.23} to the $(\sqrt[3]{n})^{-1}$-Theorem and also
from estimates in the strong operator topology to the operator-norm topology.
The first step is the operator-norm Chernoff estimate (cf. (\ref{eq:1.8.51})):

\begin{theorem}\label{th:6.2.2}{\emph{($(\sqrt[3]{n})^{-1}$-Theorem)}}
Let $C$ be a quasi-sectorial contraction on $\mathfrak{H}$ with
numerical range $W(C)\subseteq D_\alpha$, $0\leq \alpha <\pi/2$. Then
\begin{equation}\label{eq:6.2.5}
\left\|C^n - e^{n(C-\mathds{1})}\right\| \leq {L_\alpha \over n^{1/3}} \ , \ \ n \in \mathbb{N} \, ,
\end{equation}
where $L_\alpha=2K_\alpha+2$ and $K_\alpha :=\min_{\alpha' \in(\alpha, \pi/2)}K_{\alpha, \alpha'}$,
is defined by \emph{(\ref{eq:6.2.4})}.
\end{theorem}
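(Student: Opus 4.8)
The plan is to adapt the proof of the $\sqrt[3]{n}$-Lemma \ref{lem:1.8.23}, splitting the Poisson sum in representation (\ref{eq:1.8.52}) into a central part $|m-n|\le\epsilon_n$ and tails $|m-n|>\epsilon_n$, but replacing the strong-topology bounds by the operator-norm bound (\ref{eq:2.1.14}) of Proposition \ref{prop:2.1.10}. For the tails, instead of using the trivial bound $\|(C^n-C^m)x\|\le 2\|x\|$, I would write $C^n-C^m = C^{\min(n,m)}(\mathds{1}-C^{|n-m|})$ and bound $\|C^n-C^m\|\le 2$ (in operator norm), then apply the Tchebych\"{e}v inequality (\ref{eq:1.8.52-0}) exactly as in (\ref{eq:1.8.53}) to get that the tail contributes at most $(n/\epsilon_n^2)\cdot 2$ in operator norm. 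For the central part, I would factor $C^n-C^m = C^{n-k}(\mathds{1}-C^{m-n+k})$ as in (\ref{eq:1.8.52-1}), choosing $k$ so that the exponent on the leading power of $C$ is of order $n$; then telescoping $\mathds{1}-C^{j}=(\mathds{1}-C)\sum_{i=0}^{j-1}C^i$ together with (\ref{eq:2.1.14}) gives $\|C^{n-[\epsilon_n]}(\mathds{1}-C)\|\le K_\alpha/(n-[\epsilon_n]+1)$, so each central term is bounded by $\epsilon_n\cdot K_\alpha/(n-[\epsilon_n]+1)$, hence the whole central part is $\lesssim \epsilon_n K_\alpha / n$ (for $\epsilon_n$ small compared to $n$).

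Putting the two pieces together yields an operator-norm estimate of the shape
\begin{equation*}
\left\|C^n-e^{n(C-\mathds{1})}\right\|\le \frac{2n}{\epsilon_n^2}+\frac{2K_\alpha\,\epsilon_n}{n},
\end{equation*}
which is the operator-norm analogue of (\ref{eq:1.8.51}) with $\|x\|$ replaced by $1$ and $\|(\mathds{1}-C)x\|$ replaced by $2K_\alpha/n$ (up to the $n-[\epsilon_n]+1$ versus $n+1$ discrepancy, which is harmless for large $n$ and absorbed into constants). Then I would optimize in $\epsilon_n>0$: balancing $2n/\epsilon_n^2$ against $2K_\alpha\epsilon_n/n$ gives $\epsilon_n^\ast\sim (n^2/K_\alpha)^{1/3}$, and substituting back produces a bound of order $n^{2/3}\cdot n^{-2/3}\cdot$(constant)$\cdots$ — one has to track the powers carefully, but the $\sqrt[3]{n}$-structure of (\ref{eq:1.8.51-1}) combined with the extra factor $n^{-1}$ from (\ref{eq:2.1.14}) turns $\sqrt[3]{n}$ into $n^{1/3}\cdot n^{-2/3}=n^{-1/3}$, which is exactly (\ref{eq:6.2.5}). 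The constant then collapses to $L_\alpha = 2K_\alpha+2$ after writing $K_\alpha=\min_{\alpha'}K_{\alpha,\alpha'}$.

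The main obstacle I anticipate is bookkeeping with the shift in the factorization index: in the central part the relevant quantity is $\|C^{n-[\epsilon_n]}(\mathds{1}-C)\|$, and since $\epsilon_n^\ast$ grows like $n^{2/3}$, the exponent $n-[\epsilon_n^\ast]$ is still $\sim n$, so (\ref{eq:2.1.14}) gives $O(1/n)$ — but one must check that $\epsilon_n^\ast < n$ for all $n\in\mathbb{N}$ (or at least all large $n$, handling small $n$ by the trivial bound $\|C^n-e^{n(C-\mathds{1})}\|\le 2$) and that the constant $K_\alpha/(n-[\epsilon_n^\ast]+1)$ is genuinely $\le K_\alpha/(cn)$ for a fixed $c>0$. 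A second, more subtle point: the optimal $\epsilon_n$ here is a fixed number (not depending on a vector $x$), so unlike Lemma \ref{lem:1.8.23} there is no vector-dependence to worry about, which actually simplifies matters — this is precisely the "uplifting" mechanism flagged in Remark \ref{rem:1.1.1}, namely that replacing $\|(\mathds{1}-C)x\|$ by the \emph{uniform} $O(1/n)$ bound (\ref{eq:2.1.14}) is what converts the strong estimate into an operator-norm one. I would close by noting that the first assertion (norm-continuity of $\{e^{t(C-\mathds{1})}\}_{t\ge0}$) is already contained in (\ref{eq:1.8.51-2}).
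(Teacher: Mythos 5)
Your proposal is correct and follows essentially the same route as the paper: the same Poisson-sum splitting into tails (bounded via Tchebych\"{e}v by $2n/\epsilon_n^2$) and a central part (bounded via the Ritt-type estimate (\ref{eq:2.1.14}) by $\epsilon_n K_\alpha/(n-[\epsilon_n]+1)$), followed by choosing $\epsilon_n\sim n^{2/3}$ — the paper writes this as $\epsilon_n=n^{\delta+1/2}$ with $\delta=1/6$ and handles your $n-[\epsilon_n]$ concern exactly as you anticipate, via the condition $n_0\geq 2([\epsilon_n]-1)$. The constant $L_\alpha=2K_\alpha+2$ then emerges as you describe.
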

\begin{proof}
With help of inequality (\ref{eq:2.1.14}) we can improve the
estimate of the \textit{central} part of the sum (\ref{eq:1.8.52}) in Lemma \ref{lem:1.8.23}.
Note that on account of (\ref{eq:1.8.52-1}) we obtain by (\ref{eq:2.1.14}) and $\|C\| \leq 1$:
\begin{eqnarray}\label{eq:2.1.15-1}
\|C^n-C^m\| \leq |m-n| \, \|C^{n-[\epsilon_n]}(\mathds{1}-C)\| \leq \epsilon_n \
\frac{K_\alpha}{n-[\epsilon_n]+1} \ ,
\end{eqnarray}
cf. (\ref{eq:1.8.53-0}). Here $\epsilon_n := n^{\delta + 1/2}$ for $\delta < 1/2$,
which makes sense for the estimate (\ref{eq:1.8.53}) of \textit{tails}, and $[\epsilon_n]$
is the \textit{integer} part of $\epsilon_n \geq |m - n|$. Then owing to (\ref{eq:2.1.15-1}) the
\textit{central} part has estimate
\begin{equation}\label{eq:2.1.15-2}
e^{-n} \, \sum_{|m-n|\leq\epsilon_n} \frac{n^m}{m!} \ \|(C^n-C^m)\, x\| \leq
\epsilon_n \ \frac{K_\alpha}{n-[\epsilon_n]+1} \, \| x\| \, , \quad x \in \mathfrak{X} \, ,
\quad n\in {\mathds{N}} \, .
\end{equation}
As a consequence, (\ref{eq:1.8.53}) and (\ref{eq:2.1.15-2}) yield instead of (\ref{eq:1.8.51-1})
(or (\ref{eq:2.1.10})) the \textit{operator-norm} estimate:
\begin{equation}\label{eq:2.1.15-3}
\left\|C^n - e^{n(C-\mathds{1})}\right\| \leq \frac{2}{n^{2\delta}} +
\epsilon_n \ \frac{K_\alpha}{n-[\epsilon_n]+1}
\ \  , \ \ n \in \mathbb{N} \ .
\end{equation}

Let $n_{0}\in \mathbb{N}$ satisfy inequality: $n_{0} \geq 2\,([\epsilon_n]-1)$.
Then (\ref{eq:2.1.15-3}) yields
\begin{equation}\label{eq:2.1.15}
\left\|C^n - e^{n(C-\mathds{1})}\right\| \leq \frac{2}{n^{2\delta}} +
\frac{2\,K_\alpha}{n^{1/2 -\delta}} \ , \quad n > n_{0} \ .
\end{equation}
Then estimate ${M_\alpha}/{n^{1/3}}$ of the Theorem  \ref{th:6.2.2} results from the
\textit{optimal} choice in (\ref{eq:2.1.15}) of the value: $\delta = 1/6 $.
\end{proof}
\begin{corollary}\label{cor:2.1.2}
If in \emph{(\ref{eq:2.1.15})} \textit{no} estimate of the convergence rate is required,
then the operator-norm convergence to zero in \emph{(\ref{eq:6.2.5})} follows directly
from the \textit{Riesz-Dunford} representation of $\ C^n - e^{n(C-\mathds{1})}$ as the
operator-valued integral along the contour $\partial D_{\alpha' \, }$ for
$\alpha< \alpha'< \pi/2$ \emph{:}
\begin{equation*}
C^n -e^{n(C-\mathds{1})} = {1\over 2\pi i} \int_{\partial D_{\alpha'}}
\hspace{-0.3cm}\, dz \ {z^n -e^{n(z-1)} \over z-C} \ ,  \quad n\in \mathbb{N} \ ,
\end{equation*}
which provides for $n \in \mathbb{N}$ inequalities:
\begin{eqnarray}
\|C^n -e^{n(C-\mathds{1})}\| & \leq & {1\over 2\pi} \int_{{\pi\over 2}-\alpha'}^{{3\pi\over 2}+
\alpha'}\hspace{-0.3cm}\, dt \ \sin\alpha' \  {|(e^{i\, t} \sin\alpha')^{n} -
\exp \{n \ (e^{i\, t}\sin\alpha'-1)\}|\over \cos\alpha' \ \sin(\alpha'-\alpha)}
\nonumber\\
&& + {1\over\pi} \int_0^{\cos\alpha'} \hspace{-0.3cm}\, ds \ {|(1-e^{-i\alpha'} s)^n -
\exp(-n \, s \, e^{-i\alpha'})
|\over s \ \sin(\alpha'-\alpha)}  \leq \nonumber
\\
&\hspace{-4cm} \leq & \hspace{-2cm}
{{\sin\alpha' \ {(\sin\alpha')^{n} + e^{n(\sin\alpha'-1)} \over \cos\alpha' \ \sin(\alpha'-
\alpha)}}} + \label{eq:6.2.8-1} \\
&& + \int_0^{n \cos\alpha'}\hspace{-0.3cm}\, dr \ {|(1-e^{-i\alpha'} r/n)^n -
\exp(- r \, e^{-i\alpha'})|\over r \ \sin(\alpha'-\alpha)}
\ . \nonumber
\end{eqnarray}
\emph{(}For parametrisation of integrands in representation \emph{(\ref{eq:6.2.8-1})} see
notes in the proof of Proposition \emph{\ref{prop:2.1.10}}.\emph{)}

Then $\lim_{n\rightarrow\infty}\, \|C^n -e^{n(C-\mathds{1})}\| = 0$ issues by conditions
$\alpha< \alpha'< \pi/2$ and the \textit{Lebesgue dominated convergence} theorem applied to
the last integral in the right-hand side of inequalities \emph{(\ref{eq:6.2.8-1})}.
\end{corollary}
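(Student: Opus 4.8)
\textbf{Proof proposal for Corollary \ref{cor:2.1.2}.}

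The plan is to obtain the operator-norm convergence without any rate by starting from the Riesz--Dunford functional calculus representation of $C^n - e^{n(C-\mathds{1})}$ as a contour integral, bounding the resolvent on $\partial D_{\alpha'}$ by the geometric distance estimate already established in the proof of Proposition \ref{prop:2.1.10}, and then passing to the limit $n\to\infty$ under the integral sign via dominated convergence. First I would write, for $\alpha<\alpha'<\pi/2$,
\begin{equation*}
C^n - e^{n(C-\mathds{1})} = \frac{1}{2\pi i}\int_{\partial D_{\alpha'}} dz\ \frac{z^n - e^{n(z-1)}}{z-C}\ ,
\end{equation*}
which is legitimate because $\sigma(C)\subseteq\overline{W(C)}\subseteq D_\alpha$ lies strictly inside the contour $\partial D_{\alpha'}$ except for the common vertex $z=1$, and because $z\mapsto z^n - e^{n(z-1)}$ is entire; the integrand is holomorphic in a neighbourhood of $\overline{D_\alpha}$ apart from the pole structure captured by the resolvent. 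I would reuse verbatim the parametrisation of $\partial D_{\alpha'}$ from the proof of Proposition \ref{prop:2.1.10}: the circular arc $z(t)=e^{it}\sin\alpha'$ for $t\in[\pi/2-\alpha',\,3\pi/2+\alpha']$ and the two straight segments $z_\mp(s)=1-se^{\mp i\alpha'}$, $s\in[0,\cos\alpha']$, together with the resolvent bounds $\|(z\mathds{1}-C)^{-1}\|\le(\cos\alpha'\sin(\alpha'-\alpha))^{-1}$ on the arc and $\|(z\mathds{1}-C)^{-1}\|\le(|1-z|\sin(\alpha'-\alpha))^{-1}$ on the segments.

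Next I would split the contour integral into the arc contribution and the segment contribution. On the arc, $|z|=\sin\alpha'<1$ and $\mathrm{Re}(z-1)=\sin\alpha'\cos t-1\le\sin\alpha'-1<0$, so both $|z^n|=(\sin\alpha')^n$ and $|e^{n(z-1)}|\le e^{n(\sin\alpha'-1)}$ decay exponentially in $n$; bounding the numerator by $(\sin\alpha')^n+e^{n(\sin\alpha'-1)}$ and the length and resolvent factor as in \eqref{eq:6.2.8-1} shows the arc term tends to $0$. For the segment term I would perform the substitution $r=ns$ (so $ds=dr/n$ and $s$ in the denominator becomes $r/n$ cancelled by the Jacobian, leaving $dr/r$), turning the two segment integrals into
\begin{equation*}
\int_0^{n\cos\alpha'} dr\ \frac{|(1-e^{-i\alpha'}r/n)^n - e^{-re^{-i\alpha'}}|}{r\,\sin(\alpha'-\alpha)}
\end{equation*}
plus its complex conjugate partner, which is exactly the last term in \eqref{eq:6.2.8-1}. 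For fixed $r>0$ the integrand tends to $0$ pointwise as $n\to\infty$ since $(1-e^{-i\alpha'}r/n)^n\to e^{-re^{-i\alpha'}}$; the removable singularity at $r=0$ (the numerator is $O(r^2/n)$ there, uniformly) keeps the integrand locally bounded, and extending the integration to $[0,\infty)$ with the indicator $\mathbf{1}_{[0,n\cos\alpha']}$ puts everything in the dominated-convergence framework.

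The main obstacle is producing an honest, $n$-independent integrable majorant for the rescaled segment integrand on $[0,\infty)$. The small-$r$ behaviour is harmless because $|(1-e^{-i\alpha'}r/n)^n-1|\le Cr/n$ and $|e^{-re^{-i\alpha'}}-1|\le Cr$ make the quotient by $r$ bounded near $0$; the genuine work is the large-$r$ tail. There one uses $\cos\alpha'>0$ to get $|1-e^{-i\alpha'}r/n|^2 = 1-2(r/n)\cos\alpha'+(r/n)^2\le e^{-2(r/n)\cos\alpha'+(r/n)^2}$, hence $|(1-e^{-i\alpha'}r/n)^n|\le e^{-r\cos\alpha'+r^2/n}\le e^{-r\cos\alpha'/2}$ once $r\le n\cos\alpha'/2$, i.e.\ on roughly the whole range of integration; combined with $|e^{-re^{-i\alpha'}}|=e^{-r\cos\alpha'}$ this yields a majorant of the form $C_{\alpha,\alpha'}\,e^{-r\cos\alpha'/2}/r$ away from the origin, which is integrable on $[1,\infty)$, while on $[0,1]$ the bounded integrand is trivially integrable. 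With the dominating function in hand, the Lebesgue dominated convergence theorem gives that the segment term vanishes in the limit, and together with the exponentially small arc term this proves $\lim_{n\to\infty}\|C^n-e^{n(C-\mathds{1})}\|=0$.
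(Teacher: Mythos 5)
Your proposal is correct and follows essentially the same route as the paper: the Riesz--Dunford contour integral over $\partial D_{\alpha'}$ with the resolvent bounds from the proof of Proposition \ref{prop:2.1.10}, the exponentially small arc contribution, the rescaling $r=ns$ on the two segments, and the Lebesgue dominated convergence theorem applied to the resulting integral --- you merely make explicit the integrable majorant that the paper leaves implicit. One small touch-up: your bound $|(1-e^{-i\alpha'}r/n)^n|\le e^{-r\cos\alpha'/2}$ is derived only for $r\le n\cos\alpha'/2$ while the integral runs up to $n\cos\alpha'$; the convexity inequality $|1-se^{-i\alpha'}|^{2}=(1-s\cos\alpha')^2+s^2(\sin\alpha')^2\le 1-s\cos\alpha'$ for $s\in[0,\cos\alpha']$, already used in Proposition \ref{prop:2.1.10}, yields the same majorant on the full range of integration.
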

\begin{remark}\label{rem:6.2.2-2}\rm{
Recall \cite{CaZ01} that if the quasi-sectorial contraction $C$ is \textit{self-adjoint}
(i.e., $\alpha=0$), then one obtains for the rate of convergence optimal estimates:
\begin{equation}\label{eq:6.2.8}
\|C^n (\mathds{1}-C)\|\leq {1\over n+1}\ \ \mbox{and}\ \ \|C^n -
e^{n(C-\mathds{1})}\|\leq {e^{-1}\over n} \ , \quad n\in \mathbb{N} \ ,
\end{equation}
directly from the spectral representation of $C$.
}
\end{remark}

In a full similarity with $(\sqrt[3]{n})$-Lemma for the \textit{strong} operator approximation,
the $(\sqrt[3]{n})^{-1}$-Theorem is only the first step in developing
the \textit{operator-norm} approximation formula \`{a} la Chernoff. To this end one needs an
operator-norm analogue of Theorem \ref{th:1.8.24}.
The preceding includes the Trotter-Neveu-Kato \textit{strong} convergence theorem. On
that account, now we need the \textit{operator-norm} extension of this assertion for
quasi-sectorial contractions.
\begin{proposition}\label{prop:6.3.1}{\emph{(Refs.\citenum{CaZ01, Zag20})}}
Let $\{X(s)\}_{s>0}$ be a family of $m$-sectorial operators in a Hilbert space $\mathfrak{H}$ such
that for some $0< \alpha <\pi/2$ and any $s>0$ the numerical range $W(X(s))\subseteq S_\alpha$.
Let $X_0$ be an $m$-sectorial operator defined in ${\mathfrak{H}}$, with $W(X_0)\subseteq S_\alpha$.
Then the two following assertions are equivalent \emph{:}
\begin{eqnarray*}
(a) & & \lim_{s\rightarrow +0} \left\|(\zeta \mathds{1} +X(s))^{-1} -
(\zeta \mathds{1} +X_0)^{-1}\right\| = 0 \ , \ \mbox{ for } \ \zeta\in S_{\pi-\alpha} \ ,  \\
(b) & & \lim_{s\rightarrow +0} \left\|e^{-tX(s)} - e^{-tX_0}\right\| = 0 \ , \
\mbox{ for } \ t> 0 \ .
\end{eqnarray*}
Here $S_\alpha = \{z\in \mathbb{C}: |\arg(z)| < \alpha\}$ is a sector in complex plane
$\mathbb{C}$ with semi-angle $\alpha$ and vertex at $z = 0$.
\end{proposition}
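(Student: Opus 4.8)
The plan is to run the classical Dunford--Riesz contour argument that underlies the Trotter--Neveu--Kato theorem, keeping every estimate uniform in the parameter $s$. \textbf{Setup.} Each $Y\in\{X(s)\}_{s>0}\cup\{X_0\}$ is $m$-sectorial with $W(Y)\subseteq S_\alpha$, so $\sigma(Y)\subseteq\overline{S}_\alpha$ and one has the single, $s$-independent resolvent bound $\|(\zeta\mathds{1}-Y)^{-1}\|\leq\mathrm{dist}(\zeta,\overline{S}_\alpha)^{-1}$ for all $\zeta\notin\overline{S}_\alpha$. Fixing $\alpha<\alpha'<\pi/2$, I would represent $e^{-tY}=(2\pi i)^{-1}\int_\Gamma e^{-t\zeta}(\zeta\mathds{1}-Y)^{-1}\,d\zeta$ along one common contour $\Gamma$: the two rays $\{re^{\pm i\alpha'}:r\geq\epsilon\}$ joined by a small arc of radius $\epsilon$ that bypasses the vertex $z=0$ behind the sector (through the negative reals), oriented around $\overline{S}_\alpha$. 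On $\Gamma$ one has $\mathrm{dist}(\zeta,\overline{S}_\alpha)\geq\kappa|\zeta|$ on the rays and $\geq\kappa\epsilon$ on the arc with $\kappa:=\sin(\alpha'-\alpha)$, while $|e^{-t\zeta}|=e^{-tr\cos\alpha'}$ decays along the rays for every $t>0$; hence the integral converges and its integrand is dominated, uniformly in $Y$, by a $|d\zeta|$-integrable function for each fixed $t>0$. The same representation yields $\|e^{-tY}\|\leq1$ (accretivity) and the familiar bound $\|Y\,e^{-tY}\|\leq c_\alpha/t$, again uniform in $Y$.

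\textbf{Direction $(a)\Rightarrow(b)$.} Subtracting the representations, $e^{-tX(s)}-e^{-tX_0}=(2\pi i)^{-1}\int_\Gamma e^{-t\zeta}\big[(\zeta\mathds{1}-X(s))^{-1}-(\zeta\mathds{1}-X_0)^{-1}\big]\,d\zeta$. Fix $t>0$ and $\eta>0$. Using the uniform resolvent bound and the exponential decay on the rays I would pick $R$ so large that the part of the integral over $\Gamma\cap\{|\zeta|>R\}$ has norm $<\eta$ for all $s$. On the compact remainder $\Gamma_R:=\Gamma\cap\{|\zeta|\leq R\}$, which sits at distance $d_0>0$ from $\overline{S}_\alpha$, I would upgrade the pointwise-in-$\zeta$ convergence of (a) to uniform convergence: with $M:=\sup\{\|(\zeta\mathds{1}-Y)^{-1}\|:\zeta\in\Gamma_R\}<\infty$, the Neumann expansion $(\zeta\mathds{1}-X(s))^{-1}=(\zeta_0\mathds{1}-X(s))^{-1}\sum_{k\geq0}\big[(\zeta_0-\zeta)(\zeta_0\mathds{1}-X(s))^{-1}\big]^{k}$ converges uniformly for $|\zeta-\zeta_0|<(2M)^{-1}$, so convergence of the resolvents at $\zeta_0$ propagates, uniformly, to a ball around $\zeta_0$; covering $\Gamma_R$ by finitely many such balls gives $\sup_{\zeta\in\Gamma_R}\|(\zeta\mathds{1}-X(s))^{-1}-(\zeta\mathds{1}-X_0)^{-1}\|\to0$. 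Then the compact-part integral is $<\eta$ for small $s$, and $\|e^{-tX(s)}-e^{-tX_0}\|\to0$ follows.

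\textbf{Direction $(b)\Rightarrow(a)$.} First, by the same resolvent-identity/Neumann argument and the connectedness of the domain $S_{\pi-\alpha}$, on which the resolvents $(\zeta\mathds{1}+Y)^{-1}$ are locally uniformly bounded, assertion (a) for one $\zeta$ implies it for all $\zeta\in S_{\pi-\alpha}$; so it suffices to fix $\zeta_0$ with $\mathrm{Re}\,\zeta_0>0$. Since $\|e^{-tY}\|\leq1$, the Laplace representation $(\zeta_0\mathds{1}+Y)^{-1}=\int_0^\infty e^{-\zeta_0 t}e^{-tY}\,dt$ holds, so $(\zeta_0\mathds{1}+X(s))^{-1}-(\zeta_0\mathds{1}+X_0)^{-1}=\int_0^\infty e^{-\zeta_0 t}\big(e^{-tX(s)}-e^{-tX_0}\big)\,dt$. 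Splitting as $\int_0^\delta+\int_\delta^T+\int_T^\infty$: the first term is $\leq2\delta$ and the last is $\leq2(\mathrm{Re}\,\zeta_0)^{-1}e^{-(\mathrm{Re}\,\zeta_0)T}$, both made $<\eta$ uniformly in $s$ by choosing $\delta$ small and $T$ large. For the middle term I would upgrade the pointwise-in-$t$ convergence of (b) to uniform convergence on $[\delta,T]$: the bound $\|X(s)e^{-tX(s)}\|\leq c_\alpha/\delta$ for $t\geq\delta$ makes $\{t\mapsto e^{-tX(s)}\}_s$ equi-Lipschitz on $[\delta,T]$, and pointwise convergence of an equi-Lipschitz family on a compact interval is uniform; hence the middle integral tends to $0$. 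Combining the three pieces gives (a) at $\zeta_0$, hence on all of $S_{\pi-\alpha}$.

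\textbf{Main obstacle.} The two delicate points are exactly these uniformity upgrades --- from pointwise-in-$\zeta$ resolvent convergence to uniform convergence on the compact part of $\Gamma$, and from pointwise-in-$t$ semigroup convergence to uniform convergence on $[\delta,T]$ --- together with the requirement, threaded through every estimate, that the resolvent, semigroup and derivative bounds be \emph{uniform in $s$}. The latter is precisely what the hypothesis $W(X(s))\subseteq S_\alpha$ with one fixed $\alpha$ provides: a single $s$-independent majorant $\mathrm{dist}(\cdot,\overline{S}_\alpha)^{-1}$ for all the resolvents. A secondary, purely technical care is routing the vertex arc of $\Gamma$ behind the sector so that both $(\zeta\mathds{1}-Y)^{-1}$ stays bounded and $e^{-t\zeta}$ stays controlled near $\zeta=0$.
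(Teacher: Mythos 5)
The paper states Proposition \ref{prop:6.3.1} without proof, citing Refs.\citenum{CaZ01, Zag20}, and your argument is correct and follows essentially the same route as the cited source: the Dunford--Riesz integral over the indented boundary of $S_{\alpha'}$ with the single $s$-independent majorant $\mathrm{dist}(\cdot,\overline{S}_\alpha)^{-1}$ for $(a)\Rightarrow(b)$, and the Laplace representation of the resolvent for $(b)\Rightarrow(a)$, with the two uniformity upgrades handled properly. One small simplification: in $(b)\Rightarrow(a)$ the $\delta$--$T$ splitting and the equi-Lipschitz step are not needed, since dominated convergence applied to the scalar function $t\mapsto\|e^{-tX(s)}-e^{-tX_0}\|$ with majorant $2e^{-t\,\mathrm{Re}\,\zeta_0}$ already gives the conclusion.
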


Now $(\sqrt[3]{n})^{-1}$-Theorem \ref{th:6.2.2} (or Corollary \ref{cor:2.1.2}) \textit{and}
the Trotter-Neveu-Kato theorem (Proposition \ref{prop:6.3.1}) yield a desired
generalisation of the Chernoff product formula (cf. (\ref{eq:1.8.55})) for the operator-norm
convergence.
\begin{proposition}\label{prop:2.1.12}{\emph{(Refs.\citenum{CaZ01, Zag08})}}
Let $\{\Phi(s)\}_{s\geq 0}$ be a strongly measurable family of uniformly
quasi-sectorial contractions on a Hilbert space $\mathfrak{H}$, such that $\Phi(0) = \mathds{1}$
and  $W(\Phi(s)) \subset D_\alpha$ for all $s > 0$, where $0 \leq \alpha<\pi/2$. Let
\begin{equation}\label{eq:2.1.16}
X(s):=(\mathds{1}-\Phi(s))/s  \ , \quad s > 0 \, ,
\end{equation}
and let $X_0$ be a closed operator with non-empty resolvent set, defined in
$\mathfrak{H}$. Then the family $\{X(s)\}_{s>0}$ converges,
when $s\rightarrow +0$, in the uniform resolvent sense to the operator $X_0$ \emph{(}cf.
Proposition \emph{\ref{prop:6.3.1})}, if and only if
\begin{equation}\label{eq:2.1.17}
\lim_{n\rightarrow \infty} \left\|\Phi(t/n)^n -e^{-tX_0}\right\| = 0 \ , \ \ \ {\rm{for}}
\ \ t>0 \ .
\end{equation}
%
\end{proposition}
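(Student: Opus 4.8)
The plan is to prove the two implications separately, using the operator-norm Chernoff estimate of Theorem~\ref{th:6.2.2} (or Corollary~\ref{cor:2.1.2}) together with the operator-norm Trotter--Neveu--Kato theorem of Proposition~\ref{prop:6.3.1}. First I would observe that, since each $\Phi(s)$ is a quasi-sectorial contraction with $W(\Phi(s))\subset D_\alpha$, the operators $X(s) = (\mathds{1}-\Phi(s))/s$ are $m$-sectorial with $W(X(s))\subseteq S_\alpha$; this is exactly the observation recorded after Definition~\ref{def:2.1.2} (that $\mathds{1}-C$ is $m$-sectorial with vertex $0$ and semi-angle $\alpha$ whenever $C$ is quasi-sectorial), rescaled by $s>0$. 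Hence the hypotheses of Proposition~\ref{prop:6.3.1} are met for the family $\{X(s)\}_{s>0}$, provided $X_0$ is itself $m$-sectorial with $W(X_0)\subseteq S_\alpha$; the non-empty-resolvent-set assumption on $X_0$ together with uniform resolvent convergence will force $X_0$ to inherit this sectoriality, so I would note this compatibility first.

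For the direction ``uniform resolvent convergence $\Rightarrow$ (\ref{eq:2.1.17})'', I would argue as follows. By Proposition~\ref{prop:6.3.1}, uniform resolvent convergence of $X(s)\to X_0$ as $s\to+0$ is equivalent to $\|e^{-tX(s)}-e^{-tX_0}\|\to 0$ for each $t>0$. Now fix $t>0$ and set $s=t/n$, $C=\Phi(t/n)$. Then the semigroup $e^{-tX(t/n)} = e^{n(\Phi(t/n)-\mathds{1})}$ coincides with the Chernoff exponential built from $C$, and the triangle inequality gives
\begin{equation*}
\bigl\|\Phi(t/n)^n - e^{-tX_0}\bigr\| \leq \bigl\|\Phi(t/n)^n - e^{n(\Phi(t/n)-\mathds{1})}\bigr\| + \bigl\|e^{-tX(t/n)} - e^{-tX_0}\bigr\|.
\end{equation*}
The second term tends to $0$ because $t/n\to+0$; the first term is controlled by Theorem~\ref{th:6.2.2}, which gives $\|\Phi(t/n)^n - e^{n(\Phi(t/n)-\mathds{1})}\| \leq L_\alpha/n^{1/3}\to 0$, uniformly in the sense that the constant $L_\alpha$ depends only on the common semi-angle $\alpha$ and not on $n$ or on which member $\Phi(t/n)$ of the family is used. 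Combining the two estimates yields (\ref{eq:2.1.17}).

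For the converse ``(\ref{eq:2.1.17}) $\Rightarrow$ uniform resolvent convergence'', I would run the same triangle-inequality decomposition in reverse: from (\ref{eq:2.1.17}) and the Chernoff bound $\|\Phi(t/n)^n - e^{-tX(t/n)}\|\to 0$ one deduces $\|e^{-tX(t/n)} - e^{-tX_0}\|\to 0$ for every $t>0$, i.e.\ along the discrete sequence $s=t/n$. The subtlety is passing from convergence along $s=t/n$ (for all $t>0$) to genuine convergence as the continuous parameter $s\to+0$; since $t$ ranges over all of $\mathbb{R}^+$, the set $\{t/n : n\in\mathbb{N},\, t>0\}$ is all of $\mathbb{R}^+$, so for any sequence $s_k\to+0$ and any fixed $\tau>0$ one can write $s_k = \tau/n_k$ with $n_k\to\infty$ and apply the hypothesis with $t=\tau$, $n=n_k$; a little care (using strong measurability of $s\mapsto\Phi(s)$ and the uniform bound $L_\alpha$ in Theorem~\ref{th:6.2.2}) shows $\|e^{-\tau X(s_k)}-e^{-\tau X_0}\|\to 0$. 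Then Proposition~\ref{prop:6.3.1}, applied in the direction (b)$\Rightarrow$(a), converts this operator-norm semigroup convergence into the claimed uniform resolvent convergence $X(s)\to X_0$. The main obstacle, and the step deserving the most care, is this last passage from the discrete ``diagonal'' convergence at times $t/n$ to the continuous limit $s\to +0$ and the verification that $X_0$ does satisfy the sectoriality needed to invoke Proposition~\ref{prop:6.3.1}; everything else is a direct splice of Theorem~\ref{th:6.2.2} with the Trotter--Neveu--Kato equivalence.
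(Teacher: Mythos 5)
The paper itself gives no proof of Proposition~\ref{prop:2.1.12}: it is quoted from Refs.~\citenum{CaZ01, Zag08}, and the only in-text indication of the argument is the sentence preceding it (``the $(\sqrt[3]{n})^{-1}$-Theorem \emph{and} the Trotter--Neveu--Kato theorem yield\dots'') together with the decomposition (\ref{eq:E1a}) used in Corollary~\ref{cor:4.1.1}. Your forward direction is exactly that argument and is sound: the triangle inequality splits $\|\Phi(t/n)^n - e^{-tX_0}\|$ into a Chernoff term controlled by Theorem~\ref{th:6.2.2} (with $L_\alpha$ depending only on $\alpha$, hence uniform over the family) and a Trotter--Neveu--Kato term controlled by Proposition~\ref{prop:6.3.1}(a)$\Rightarrow$(b). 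Your preliminary observation that $X(s)$ is $m$-sectorial with $W(X(s))\subseteq S_\alpha$ is also the right way to legitimise the appeal to Proposition~\ref{prop:6.3.1}.

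The genuine soft spot is in your converse direction, and the specific device you propose does not work as written. You claim that for an arbitrary sequence $s_k\to+0$ and a \emph{fixed} $\tau>0$ one can write $s_k=\tau/n_k$ with $n_k\in\mathbb{N}$; this forces $\tau/s_k\in\mathbb{N}$, which fails for generic $s_k$. What one can do is set $n_k:=[\tau/s_k]$ and $t_k:=n_k s_k\to\tau$, so that $e^{-\tau X(s_k)}$ is compared with $\Phi(t_k/n_k)^{n_k}$ at the \emph{varying} times $t_k$; to conclude one then needs either uniformity of the hypothesis (\ref{eq:2.1.17}) in $t$ on compact subsets of $(0,\infty)$, or norm-continuity of $t\mapsto e^{-tX_0}$ and of the approximants for $t>0$ (available here because the limit semigroup is holomorphic, but this must be said). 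You correctly flag this discrete-to-continuous passage as the delicate step, but the one concrete mechanism you offer for it is false, so as it stands the converse implication is not established. The forward implication, which is the one the paper actually uses downstream (Corollaries~\ref{cor:4.1.1} and~\ref{cor:4.1.2}), is complete.
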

\begin{corollary}\label{cor:4.1.1}\emph{(operator-norm Euler formula)}
If $A$ is an $m$-sectorial operator in Hilbert space $\mathfrak{H}$, with semi-angle
$\alpha\in [0,\pi/2)$ and vertex at $z=0$, then
\begin{equation}\label{eq:E1}
\lim_{n\rightarrow \infty} \left\|(\mathds{1}+tA/n)^{-n} - e^{-tA}\right\| = 0 \, ,
\quad t\in S_{\pi/2-\alpha} \, ,
\end{equation}
for $n \in \mathbb{N}$.
\end{corollary}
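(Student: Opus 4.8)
The plan is to deduce Corollary \ref{cor:4.1.1} as a direct application of Proposition \ref{prop:2.1.12} to the specific family of quasi-sectorial contractions furnished by the resolvents of $A$. First I would set $\Phi(s) := (\mathds{1} + sA)^{-1}$ for $s > 0$ and $\Phi(0) := \mathds{1}$. By Proposition \ref{prop:2.1.3} (and its announced extension to the full range $\alpha \in [0,\pi/2)$ via Ref.\citenum{ArZ10}, cited right after Corollary \ref{cor:2.1.1}) this is a family of quasi-sectorial contractions with $W(\Phi(s)) \subset D_\alpha$ for all $s>0$; strong measurability in $s$ is clear since $s \mapsto (\mathds{1}+sA)^{-1}$ is even norm-continuous on $(0,\infty)$ by the resolvent identity. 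Thus the hypotheses on $\{\Phi(s)\}_{s\geq 0}$ in Proposition \ref{prop:2.1.12} are met.

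Next I would identify the operator $X(s) = (\mathds{1} - \Phi(s))/s$ explicitly. A short computation gives
\begin{equation*}
X(s) = \frac{1}{s}\bigl(\mathds{1} - (\mathds{1}+sA)^{-1}\bigr) = \frac{1}{s}\,(\mathds{1}+sA)^{-1}\,(sA) = A\,(\mathds{1}+sA)^{-1} = (\mathds{1}+sA)^{-1} A,
\end{equation*}
which is the Yosida-type approximation of $A$. The candidate limit operator is $X_0 := A$, which is $m$-sectorial with semi-angle $\alpha$ and vertex $z=0$, hence closed with non-empty resolvent set, as required. It then remains to check the equivalent condition of Proposition \ref{prop:2.1.12}, namely that $X(s) \to A$ as $s \to +0$ in the uniform resolvent sense; once that is in hand, the ``if'' direction of Proposition \ref{prop:2.1.12} delivers exactly $\lim_{n\to\infty}\|\Phi(t/n)^n - e^{-tA}\| = 0$ for $t>0$, and since $\Phi(t/n)^n = (\mathds{1}+tA/n)^{-n}$ this is precisely \eqref{eq:E1} for $t>0$; the extension to the sector $t \in S_{\pi/2-\alpha}$ follows because all operators involved are holomorphic in $t$ on that sector (Corollary \ref{cor:2.1.1}) and the convergence is locally uniform, so the case $t=0$ is the trivial identity and the complex $t$ case follows by analyticity.

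The main obstacle, and the one genuinely requiring work, is verifying the uniform resolvent convergence $X(s) \to A$. The clean route is to compute, for $\zeta$ in the appropriate sector $S_{\pi-\alpha}$,
\begin{equation*}
(\zeta\mathds{1} + X(s))^{-1} - (\zeta\mathds{1} + A)^{-1} = (\zeta\mathds{1}+X(s))^{-1}\,\bigl(A - X(s)\bigr)\,(\zeta\mathds{1}+A)^{-1},
\end{equation*}
and to bound the right-hand side in operator norm. Using $A - X(s) = A - A(\mathds{1}+sA)^{-1} = sA^2(\mathds{1}+sA)^{-1} = sA\,X(s)$ and the sectorial resolvent estimates $\|(\zeta\mathds{1}+A)^{-1}\| \leq C_\alpha/|\zeta|$ together with uniform-in-$s$ bounds on $\|(\zeta\mathds{1}+X(s))^{-1}\|$ (these hold because the $X(s)$ are uniformly $m$-sectorial, $W(X(s)) \subseteq \overline{S}_\alpha$, as one reads off from Proposition \ref{prop:2.1.3}), one gets a factor of order $s$ times bounded operator norms, so the difference is $O(s)$ and vanishes as $s \to +0$. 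One subtlety to handle carefully is that $A$ need not be bounded, so the manipulation $A - X(s) = sA X(s)$ must be read on $\operatorname{dom}(A)$ and then the resulting bounded-operator identities extended by density/continuity; alternatively one writes everything through the resolvents from the start, e.g. $(\zeta\mathds{1}+X(s))^{-1} = s(s\zeta\mathds{1} + \mathds{1} - \Phi(s))^{-1}$, and tracks the norm estimates purely in terms of $\Phi(s) = (\mathds{1}+sA)^{-1}$ and the geometry of $D_\alpha$. Either way the estimate is routine once set up; the only care needed is bookkeeping of the sectors and the uniformity of constants in $s$.
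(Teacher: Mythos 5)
Your proposal is correct and follows essentially the same route as the paper: both identify $\Phi(s)=(\mathds{1}+sA)^{-1}$ as a family of quasi-sectorial contractions via Proposition \ref{prop:2.1.3} (extended to $\alpha\in[0,\pi/2)$), compute $X(s)=A(\mathds{1}+sA)^{-1}$, establish the $O(s)$ uniform resolvent convergence to $X_0=A$ through the very same factorisation $s\,A(\zeta\mathds{1}+A+\zeta sA)^{-1}A(\zeta\mathds{1}+A)^{-1}$ with sectorial resolvent bounds, and then conclude by Proposition \ref{prop:2.1.12}. Your added remarks on the unbounded-$A$ bookkeeping and on extending from $t>0$ to $t\in S_{\pi/2-\alpha}$ by holomorphy are sensible refinements of points the paper leaves implicit.
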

\begin{proof}
Since by condition the numerical range $W(A)\subset S_\alpha$, on account of Proposition
\ref{prop:2.1.3} and Remark \ref{rem:2.1.1}, $\{\Phi(t) := (\mathds{1}+tA)^{-1}\}_{t > 0}$ is a
family of quasi-sectorial contractions with $W(\Phi(t))\subset D_\alpha$, $\alpha\in [0,\pi/2)$.

Let $X(s):= (\mathds{1}-\Phi(s))/s$, $s>0$, and $X_0:=A$.
Then for $\zeta\in S_{\pi-\alpha}$ on account of estimate:
%
\begin{eqnarray*}
&&\left\| {A\over\zeta \mathds{1} +A+\zeta sA}\cdot{A\over\zeta \mathds{1} +A}\right\| \leq \\
&&\left(1+
{|\zeta|\over \mbox{dist}\left({\zeta (1+s\zeta)^{-1}},-S_\alpha\right)}\right)
\left(1+{|\zeta|\over \mbox{dist} (\zeta,-S_\alpha)}\right) \ ,
\end{eqnarray*}
the family $\{X(s)\}_{s > 0}$ converges, when $s\rightarrow +0$, to $X_0$ in the resolvent
uniform (i.e., \textit{operator-norm}) sense with asymptotic:
\begin{equation*}
\|(\zeta \mathds{1} +X(s))^{-1} - (\zeta \mathds{1} +X_0)^{-1}\| =
s \, \left\| {A\over\zeta \mathds{1} +A+
\zeta sA}\cdot{A\over\zeta \mathds{1} +A}\right\| = O(s) \, .
\end{equation*}
As a consequence of Proposition \ref{prop:6.3.1}, the family $\{\Phi(t)\}_{t\geq 0}$
satisfies the conditions of Proposition \ref{prop:2.1.12}. Then seeing that
for $t>0$ and $n \in \mathbb{N}$ one has estimate
\begin{eqnarray}\label{eq:E1a}
\left\|\Phi(t/n)^n -e^{-tX_0}\right\| && \leq \\
&& \|\Phi(t/n)^n -e^{-t \, X(t/n)}\| +
\|e^{-t \, X(t/n)} - e^{-t \, X_0}\| \, . \nonumber
\end{eqnarray}
This provides by (\ref{eq:6.2.5}) for $C = \Phi(t/n)$ and Proposition {\ref{prop:6.3.1} (b)}
the operator-norm approximation
(\ref{eq:2.1.17}), which is the \textit{Euler formula} (\ref{eq:E1}).
\end{proof}

According to (\ref{eq:E1a})
the \textit{rate} of operator-norm convergence of the \textit{Chernoff}
product formula {(\ref{eq:2.1.17})} is determined by convergence rate of the \textit{Chernoff}
estimate {(\ref{eq:6.2.5})} of $\left\|\Phi(t/n)^n - e^{-t \, X(t/n)}\right\|$
{along with} the rate of convergence in the \textit{Trotter-Neveu-Kato} theorem for
$\left\|e^{-t \, X(t/n)} - e^{-t \, X_0}\right\|$, Proposition {\ref{prop:6.3.1} (b)}.
Then for the \textit{Euler formula} (\ref{eq:E1}) the accuracy of this \textit{two-step} estimate
is limited by the order $O(1/n^{1/3})$ because of the Chernoff estimate (\ref{eq:6.2.5}) on the
the \textit{first} step in the right-hand side of (\ref{eq:E1a}).
\begin{remark}\label{rem:6.2.2-3}\rm{
Note that the rate $O(1/n^{1/3})$ is far from to be \textit{optimal}, which \textit{a fortiori} is
known as $O(1/n)$. Indeed, in Ref.\citenum{CaZ01} (Theorem 5.1) by a direct \textit{one-step}
telescopic estimate it was shown that the rate of convergence in (\ref{eq:E1}) is at least of the
order $O(\ln(n)/n)$.
}
\end{remark}
\begin{corollary}\label{cor:4.1.2}\emph{(operator-norm Trotter product formula)}
If contractions $\{\Phi(t) := e^{- t A} e^{- t B}\}_{t \geq 0}$ \emph{(\ref{eq:1.8.64})}
satisfy conditions Proposition \emph{\ref{prop:2.1.12}}, then by necessity part of this assertion
and \emph{(\ref{eq:E1a})} the limit $n\rightarrow \infty$ yields the Trotter product formula
\emph{(\ref{eq:1.8.63})} in the operator-norm topology, cf. Proposition \emph{\ref{prop:1.8.25}}.
The rate of convergence \emph{(}if any\emph{)} is determined by the first and the second steps
in the right-hand side of \emph{(\ref{eq:E1a})}, cf. Theorem \emph{5.3} in
\emph{Ref.\citenum{CaZ01}}.
\end{corollary}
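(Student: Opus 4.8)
The plan is to derive the operator-norm Lie--Trotter formula directly from Proposition~\ref{prop:2.1.12} and the two-step splitting~(\ref{eq:E1a}), so that the only work is to check the hypotheses and to identify the limit operator. First I would fix $\Phi(t):=e^{-tA}e^{-tB}$ as in~(\ref{eq:1.8.64}); it is a contraction with $\Phi(0)=\mathds{1}$ and is norm-continuous in $t$, so in particular the strong measurability demanded by Proposition~\ref{prop:2.1.12} is automatic. By hypothesis $\{\Phi(s)\}_{s\ge 0}$ is a family of uniformly quasi-sectorial contractions with $W(\Phi(s))\subset D_\alpha$ for some fixed $\alpha\in[0,\pi/2)$, and $X(s):=(\mathds{1}-\Phi(s))/s$ converges, as $s\to +0$, in the uniform resolvent sense to a closed operator $X_0$ with non-empty resolvent set. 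Thus all the hypotheses of Proposition~\ref{prop:2.1.12} are met, and it yields $\lim_{n\to\infty}\|\Phi(t/n)^n-e^{-tX_0}\|=0$ for every $t>0$.

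It remains to identify $X_0$. On the core $D={\rm{core}}(C)$ one has, exactly as in~(\ref{eq:1.8.65}), $\lim_{s\to+0}s^{-1}(\mathds{1}-\Phi(s))\,x=(A+B)\,x=C\,x$, so $X(s)\,x\to C\,x$ for all $x\in D$; since the operators $X(s)$ are (uniformly) $m$-accretive and $D$ is a core of the generator $C:=\overline{(A+B)}$ of the contraction $C_0$-semigroup $\{U_C(t)\}_{t\ge0}$, the Trotter--Neveu--Kato theorem gives $X(s)\to C$ in the strong resolvent sense; since uniform resolvent convergence implies strong resolvent convergence and the latter limit is unique, $X_0=C$. (Alternatively, comparing with the strongly convergent formula of Proposition~\ref{prop:1.8.25} gives $e^{-tX_0}=e^{-tC}$ for all $t>0$, hence $X_0=C$.) Therefore
\[
\lim_{n\to\infty}\big\|(e^{-tA/n}e^{-tB/n})^n-e^{-tC}\big\|=0,\qquad t\in\mathbb{R}_{0}^{+},
\]
which is~(\ref{eq:1.8.63}) in the operator-norm topology.

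For the rate I would use~(\ref{eq:E1a}) with $X_0=C$: the first term $\|\Phi(t/n)^n-e^{n(\Phi(t/n)-\mathds{1})}\|$ is $O(1/n^{1/3})$ by the $(\sqrt[3]{n})^{-1}$-Theorem~\ref{th:6.2.2} applied to the quasi-sectorial contraction $\Phi(t/n)$, while the second term $\|e^{-tX(t/n)}-e^{-tX_0}\|$ is governed by the rate of convergence in the Trotter--Neveu--Kato theorem (Proposition~\ref{prop:6.3.1}(b)), i.e.\ by the speed at which the resolvents of $X(s)$ approach those of $X_0$ as $s\to+0$. Hence this elementary two-step bound saturates at $O(1/n^{1/3})$ because of the Chernoff step, while the genuine, essentially optimal rate $O(\ln(n)/n)$ is obtained only by the one-step telescopic argument of Theorem~5.3 in Ref.~\citenum{CaZ01}; cf.\ Remark~\ref{rem:6.2.2-3}. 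The main obstacle is conceptual rather than computational: it is the standing hypothesis that $\{e^{-tA}e^{-tB}\}_{t\ge0}$ is genuinely a family of uniformly quasi-sectorial contractions with numerical ranges in a common region $D_\alpha$ --- a real restriction on the pair $(A,B)$ that is simply postulated --- together with the verification that the uniform-resolvent limit $X_0$ is indeed $\overline{(A+B)}$.
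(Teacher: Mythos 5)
Your argument follows the paper's own (implicit) proof exactly: the corollary is established by invoking the necessity direction of Proposition~\ref{prop:2.1.12} together with the two-step splitting~(\ref{eq:E1a}), with the Chernoff step bounded by Theorem~\ref{th:6.2.2} and the second step by Proposition~\ref{prop:6.3.1}(b). Your explicit identification of the uniform-resolvent limit $X_0$ with $C=\overline{(A+B)}$ (via uniqueness of strong resolvent limits and Proposition~\ref{prop:1.8.25}) is a detail the paper leaves tacit, and is a correct and welcome addition.
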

\begin{remark}\label{rem:6.2.2-4}\rm{
Note that in contrast to the problem of semigroup approximation (Corollary \ref{cor:4.1.1}),
that needs assumptions only on generator $A$, the approximation by the Trotter product formula
requests a condition on a couple of generators $\{A,B\}$ (Proposition \ref{prop:1.8.25} and
Corollary \ref{cor:4.1.2}).
Since the Trotter product approximants $\{\Phi(t/n)^n\}_{n \geq 1, t \geq 0}$ involve a
\textit{couple} of generators $\{A,B\}$, the proof must take into account a \textit{subordination}
of generators $A$ and $B$.
A variety of the \textit{one-} and \textit{two-step} methods (including the Chernoff estimate),
as well as of the corresponding conditions, that ensure the convergence of the Trotter product
formula, is quite large, see, for example, Chapters 5.1-5.3
in Ref.\citenum{Zag19}. They determine the sense of the algebraic sum $(A + B)$ and the accuracy
of the operator-norm convergence estimates, compare, e.g., {Ref.\citenum{CaZ01}} and
Refs.\citenum{NZ98, ITTZ01}.
}
\end{remark}
\section{Chernoff Estimate and Dunford-Segal Approximation}\label{sec:5}
Here we continue with more comments about \textit{optimal} results for the rate of
convergence for operator-norm approximants of $C_0$-semigroups mentioned in
Remark \ref{rem:6.2.2-3}. This approximation theory was advanced in Ref.\citenum{GoTo14}
(and improved later in Ref.\citenum{GoKoTo19}) for the \textit{Yosida}, the \textit{Dunford-Segal}
and the \textit{Euler} approximations of $C_0$-semigroups.
In addition to standard analysis of approximations in the
strong operator topology (cf. Ref.\citenum{But20}) the paper Ref.\citenum{GoTo14} proposed to
study a \textit{vector-dependent} estimates of the convergence rate for approximants, see
Remark \ref{rem:1.1.0}.
For holomorphic $C_0$-semigroups this estimates of convergence can be uplifted to the
operator-norm topology.

This last aspect will be considered in the present section versus the Chernoff operator-norm
estimate. We start by citation of the optimal $O(1/n)$ result for the rate of the operator-norm
convergence for a simple case of the Euler approximants (\textit{Euler formula} (\ref{eq:E1})),
which also shows a sectorial dependence of the upper bound.
\begin{proposition}\label{prop:2.1.12-1}{\emph{(Ref.\citenum{ArZ10})}}
Let $A$ be an $m$-sectorial operator in Hilbert space $\mathfrak{H}$, with semi-angle
$\alpha\in [0,\pi/2)$ and vertex at $z=0$. Then
$\{e^{-t \, A}\}_{\, t \geq 0}$ is a holomorphic quasi-sectorial contraction semigroup and
one infers that
\begin{equation}\label{eq:E2}
\left\|\left(\mathds{1} + {t}A/n \right)^{-n}- e^{-tA}\right\|\le
\frac{M_{\alpha}}{(\cos\alpha)^2 \ n} \ , \ t\ge 0 \ , \  n \in \mathbb{N} \ ,
\end{equation}
where
\begin{equation}\label{eq:2.1.18}
\frac{\pi\sin\alpha}{2\alpha}\le M_{\alpha}\le\min\left(\frac{\pi-\alpha}{\alpha}, \,
M\right)\quad{\rm{and}}\quad\frac{\pi}{2}\le M\le
2+\frac{2}{\sqrt{3}} \ .
\end{equation}
\end{proposition}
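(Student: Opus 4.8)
The plan is to establish the operator-norm estimate \eqref{eq:E2} by the \emph{one-step telescopic method} rather than the two-step route through Corollary \ref{cor:4.1.2}, since the two-step argument is limited to the rate $O(1/n^{1/3})$ (see Remark \ref{rem:6.2.2-3}) whereas the claimed bound is the optimal $O(1/n)$. First I would set $F(t) := (\mathds{1} + tA)^{-1}$; by Proposition \ref{prop:2.1.3} (together with its extension to $\alpha \in [0,\pi/2)$ via Ref.\citenum{ArZ10}) this is a quasi-sectorial contraction with $W(F(t)) \subset D_\alpha$, and by Corollary \ref{cor:2.1.1} and Remark \ref{rem:2.1.1} the Euler approximants $F(t/n)^n$ converge strongly to $e^{-tA}$. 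The core of the argument is the telescopic identity
\begin{equation*}
F(t/n)^n - e^{-tA} = \sum_{k=0}^{n-1} F(t/n)^{n-k-1}\,\bigl(F(t/n) - e^{-tA/n}\bigr)\, e^{-ktA/n} \, ,
\end{equation*}
which reduces the problem to estimating the single-step defect $\|F(t/n) - e^{-tA/n}\|$ and then summing with the a priori contraction bounds $\|F(t/n)^{j}\| \le 1$ and $\|e^{-stA}\| \le 1$.

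The single-step defect is controlled by writing, with $s := t/n$,
\begin{equation*}
F(s) - e^{-sA} = (\mathds{1} + sA)^{-1} - e^{-sA} = -\,(\mathds{1}+sA)^{-1}\bigl[(\mathds{1}+sA)e^{-sA} - \mathds{1}\bigr] \, ,
\end{equation*}
and estimating the bracketed operator via holomorphic functional calculus along $\partial S_\alpha$: since $(1+z)e^{-z} - 1 = O(z^2)$ near $0$, one obtains a bound of the form $\|F(s) - e^{-sA}\| \le c\, \|sA\,(\mathds{1}+sA)^{-1}\|^2 / (\text{something bounded})$, and crucially $\|sA(\mathds{1}+sA)^{-1}\| = \|\mathds{1} - F(s)\| $ is uniformly bounded on the sector. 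To extract the sharp $(\cos\alpha)^{-2}$ dependence and the constant $M_\alpha$ in \eqref{eq:2.1.18} I would parametrise the contour exactly as in the proof of Proposition \ref{prop:2.1.10}, use the resolvent bound $\|(A - z\mathds{1})^{-1}\| \le (\operatorname{dist}(z,\overline{S}_\alpha))^{-1}$, and carefully track how the distance from the integration contour to the sector scales in $\cos\alpha$. Summing the telescopic series then gives a factor $n$ times a single-step bound of order $s^2 = t^2/n^2$ times the operator norm of $A^2(\mathds{1}+sA)^{-2}$-type terms; because $\{e^{-tA}\}$ is holomorphic, $\|A e^{-rtA}\| \le C/(rt)$, so the product $\|(\mathds{1}+sA)^{-1}\| \cdot \| \text{(quadratic defect)}\| \cdot \|e^{-ktA/n}\|$ telescopes to something like $(t^2/n^2)\cdot \sum_{k} (\text{bounded})$, yielding the $1/n$ rate with a sectorial constant.

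The hard part will be twofold: first, obtaining the defect estimate \emph{uniformly in $t \ge 0$} (not just for $t$ in a compact set), which forces one to exploit the holomorphic smoothing $\|A e^{-rtA}\| = O(1/(rt))$ to absorb the blow-up of individual summands near $k=0$ and $k=n-1$ — this is the standard but delicate point in telescopic proofs of Euler-type formulae; and second, pinning down the explicit numerical constants so that the two-sided bounds in \eqref{eq:2.1.18} hold, in particular the lower bound $\pi\sin\alpha/(2\alpha) \le M_\alpha$, which presumably comes from testing the estimate against the scalar case $A = \lambda$ with $\lambda$ on the boundary ray $\arg\lambda = \alpha$ and computing $\sup_{r>0}|(1+r/n)^{-n} - e^{-r}| \cdot n$ asymptotically. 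Since the statement is attributed to Ref.\citenum{ArZ10}, I would expect the author's proof to either reproduce that telescopic computation or simply cite it; in a self-contained treatment the sharp-constant bookkeeping along $\partial S_\alpha$ is the genuinely laborious step, while the $O(1/n)$ \emph{order} follows cleanly from the telescopic identity plus holomorphy.
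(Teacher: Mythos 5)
The first thing to note is that the paper does not prove this proposition at all: it is stated as a citation of Ref.\citenum{ArZ10}, and the text moves on immediately to discuss the non-optimality of the constant $M_\alpha$. So your closing guess that the author would ``simply cite it'' is exactly what happens. Your sketch does identify the right strategy --- the one-step telescopic decomposition is what Ref.\citenum{CaZ01} (Theorem 5.1) and Ref.\citenum{ArZ10} actually use, and it is the only route in this circle of ideas that can reach $O(1/n)$, since the two-step Chernoff argument of Section \ref{sec:4} is capped at $O(1/n^{1/3})$ by the estimate (\ref{eq:6.2.5}).

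As written, however, the summation step has a genuine gap. You bound the telescopic sum by ``$n$ times a single-step bound of order $t^2/n^2$'', i.e. $O(t^2/n)$, which is neither uniform in $t\ge 0$ nor of the claimed form; and the smoothing estimate you invoke to repair this, $\|Ae^{-rtA}\|=O(1/(rt))$, is only first order, whereas the single-step defect $F(t/n)-e^{-tA/n}$ is quadratic in $(t/n)A$. First-order smoothing is precisely what produces the $O(\ln(n)/n)$ loss of Ref.\citenum{CaZ01}; to reach $O(1/n)$ one must split the sum and use second-order smoothing on both wings. For $k\ge n/2$ the holomorphic bound $\|A^2e^{-k(t/n)A}\|=O\big((n/(kt))^2\big)$ makes the $k$-th summand $O(1/k^2)$; for $k<n/2$ one needs the corresponding decay of the discrete powers, i.e. a squared Ritt-type estimate $\|F(t/n)^{j}(\mathds{1}-F(t/n))^2\|=O(1/j^2)$, a strengthening of Proposition \ref{prop:2.1.10} that you never invoke. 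Both halves then sum to $O(1/n)$ uniformly in $t$. Finally, your guess for the provenance of the lower bound $\pi\sin\alpha/(2\alpha)\le M_\alpha$ (testing on the boundary ray of the sector) is plausible but unsubstantiated; establishing the two-sided bounds in (\ref{eq:2.1.18}) is the actual content of Theorem 4.1 in Ref.\citenum{ArZ10} and cannot be read off from the order-of-magnitude argument alone.
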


Note that estimate (\ref{eq:2.1.18}) of the coefficient $M_{\alpha}$ in (\ref{eq:E2}) is not
\textit{optimal}. To this aim we elucidate (\ref{eq:2.1.18}) for the case of a non-negative
self-adjoint operator $A\,$, that is, for $\alpha = 0$, cf. Remark \ref{rem:6.2.2-2}.
As a result one obtains by the spectral calculus:
\begin{equation}\label{eq:E2-1}
\left\|(\mathds{1}+tA/n)^{-n} - e^{-tA}\right\| \leq \frac{e^{-1}}{n}\, , \quad t\ge 0 \ ,
\quad  n \in \mathbb{N} \ ,
\end{equation}
cf. one of the error bound in (\ref{eq:6.2.8}).
In Ref.\citenum{GoTo14} (Corollary 1.6 c) the Proposition \ref{prop:2.1.12-1} was extended
to a Banach space and any bounded holomorphic $C_0$-semigroup.

We conclude this section by demonstration that the Chernoff estimate (\ref{eq:6.2.5})
\textit{itself }, i.e., without the two-step construction (\ref{eq:E1a}),
yields for holomorphic $C_0$-semigroups the \textit{Dunford-Segal} approximation
introduced in Ref.\citenum{GoTo14}, Theorem 1.1 b).
\begin{theorem}\label{th:6.2.3}
If $A$ is an $m$-sectorial operator in  Hilbert space $\mathfrak{H}$, with semi-angle
$\alpha\in [0,\pi/2)$ and vertex at $z=0$, then
\begin{equation}\label{eq:E3}
\left\|e^{-n \,(\mathds{1} - e^{- t\, A/n})} - e^{- t\, A}\right\| \le
\frac{N}{(\cos\alpha)^2 \ n} \ , \quad t\ge 0 \ , \quad  n \in \mathbb{N} \ ,
\end{equation}
for some bounded $N > 0$.
\end{theorem}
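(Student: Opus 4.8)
The plan is to recognize the Dunford--Segal approximant as an instance of the Chernoff scheme applied to the specific family of contractions $C = C_{n,t} := e^{-tA/n}$, and then to exploit the quasi-sectorial estimates already at our disposal. First I would set $C := e^{-tA/n}$ and note that, since $A$ is $m$-sectorial with semi-angle $\alpha$ and vertex at $z=0$, Corollary \ref{cor:2.1.1} (together with the extension to $\alpha \in [0,\pi/2)$ mentioned after Remark \ref{rem:2.1.1}) tells us that $C$ is a quasi-sectorial contraction with $W(C) \subseteq D_\alpha$. Hence Theorem \ref{th:6.2.2} applies verbatim and gives $\|C^m - e^{m(C-\mathds{1})}\| \le L_\alpha m^{-1/3}$ for any $m \in \mathbb{N}$; but this is only the suboptimal $n^{-1/3}$ rate, so a cruder application will not be enough to reach $O(1/n)$. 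The point of the theorem is that for the Dunford--Segal choice one can do better than the generic $\sqrt[3]{n}$ bound by keeping track of the holomorphy of the semigroup, which is encoded in Proposition \ref{prop:2.1.10}: $\|C^m(\mathds{1}-C)\| \le K_{\alpha,\alpha'}/(m+1)$.

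The heart of the argument is a \emph{telescopic} estimate in the spirit of (\ref{eq:1.1.2}). Writing $e^{-n(\mathds{1}-e^{-tA/n})} - e^{-tA} = e^{n(C-\mathds{1})} - C^n$ (since $C^n = e^{-tA}$ exactly, by the semigroup law!), I would use
\begin{equation*}
e^{n(C-\mathds{1})} - C^n = \sum_{k=0}^{n-1} C^{n-k-1}\,\bigl(e^{(C-\mathds{1})} - C\bigr)\,e^{k(C-\mathds{1})},
\end{equation*}
then insert the expansion (\ref{eq:1.1.3}), $e^{(C-\mathds{1})} - C = -\tfrac12(\mathds{1}-C)^2 - (\mathds{1}-C)^3 R(C)$ with $\|R(C)\| \le e^2/6$ from (\ref{eq:1.1.4}). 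The key observation is that the factor $(\mathds{1}-C)^2 = (\mathds{1}-e^{-tA/n})^2$ carries two copies of $(\mathds{1}-C)$, and $\mathds{1}-C = \mathds{1}-e^{-tA/n}$ is itself $O(1/n)$ in a suitable sense on the range of the semigroup. More precisely, one copy of $(\mathds{1}-C)$ can be absorbed into the adjacent $C^{n-k-1}$ via Proposition \ref{prop:2.1.10}, giving a factor $\lesssim 1/(n-k)$; the remaining copy of $(\mathds{1}-C)$ can be absorbed into the bounded analytic semigroup factor $e^{k(C-\mathds{1})}$ — recall $\mathds{1}-C$ is $m$-sectorial, so $\|(\mathds{1}-C)e^{k(C-\mathds{1})}\| \lesssim 1/(k+1)$ by the standard analyticity bound (sectoriality gives $\|Xe^{-sX}\|\le c/s$). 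Summing $\sum_{k=0}^{n-1} \frac{1}{(n-k)(k+1)} \sim \frac{\ln n}{n}$ — or, more carefully, splitting the range $k \le n/2$ and $k > n/2$ and using one bound of the form $\|C^{n-k-1}(\mathds{1}-C)^2\| \le \|C^{n-k-1}(\mathds{1}-C)\|\cdot\|\mathds{1}-C\|$ together with $\|\mathds{1}-C\|\le 2$ on one half and the analytic bound on the other — yields the claimed $O(1/n)$. The cubic remainder term is handled the same way and is lower order. Tracking the $\alpha$-dependence, every application of the sectorial analyticity bound for $\mathds{1}-C$ contributes a factor $\sim 1/\cos\alpha$ (the sectoriality constant of $\mathds{1}-e^{-tA/n}$ degrades like $1/\cos\alpha$), and two such factors appear, producing the $1/(\cos\alpha)^2$ prefactor; the constant $N$ then collects $e^2/6$, the $K_{\alpha,\alpha'}$ optimized over $\alpha'$, and the $\sum 1/((n-k)(k+1))$ harmonic constant, all of which are bounded uniformly in $t \ge 0$ because the bounds depend on $t$ only through $C = e^{-tA/n}$, which is always a quasi-sectorial contraction.

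The main obstacle I anticipate is getting the $t$-uniformity and the exact $(\cos\alpha)^{-2}$ scaling simultaneously clean: one must phrase the analyticity estimate for $e^{-s(\mathds{1}-C)}$ in a way whose constant is genuinely $t$-independent and scales correctly with $\alpha$, which forces care about whether one uses $W(\mathds{1}-C) \subseteq \overline{S}_\alpha$ directly or passes through the resolvent bound (\ref{eq:2.1.2})-type inequality. A secondary subtlety is that $\sum_{k} \frac{1}{(n-k)(k+1)}$ naively produces a $\ln n$ factor, so to hit the stated $O(1/n)$ (rather than $O(\ln n / n)$) one genuinely needs the asymmetric splitting — using the crude $\|\mathds{1}-C\| \le 2$ bound on the half of the sum where the Ritt-type factor $1/(n-k)$ is already small, and the analytic decay $1/(k+1)$ only on the other half — so that each half sums to $O(1/n)$ on its own. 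I would expect the bookkeeping of these constants, rather than any conceptual difficulty, to be where the real work lies; the structural input (Proposition \ref{prop:2.1.10}, the expansion (\ref{eq:1.1.3})-(\ref{eq:1.1.4}), and $C^n = e^{-tA}$) is all already available.
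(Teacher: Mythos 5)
Your route is genuinely different from the paper's. The paper's own proof is very short: it sets $C=e^{-tA/n}$, notes $C$ is a quasi-sectorial contraction, applies Theorem \ref{th:6.2.2} to obtain only the rate $O(n^{-1/3})$ in (\ref{eq:E4}), and then asserts that the uplift to (\ref{eq:E3}) ``follows verbatim'' the external arguments behind Proposition \ref{prop:2.1.12-1}. Your starting point is the same (the key identity $C^n=e^{-tA}$, so the Dunford--Segal error is exactly $\|C^n-e^{n(C-\mathds{1})}\|$), but your plan to reach $O(1/n)$ self-containedly via the telescopic identity (\ref{eq:1.1.2}), the expansion (\ref{eq:1.1.3})--(\ref{eq:1.1.4}), the Ritt bound of Proposition \ref{prop:2.1.10}, and the analyticity of $e^{-s(\mathds{1}-C)}$ is a legitimate alternative and, in principle, more informative than what the paper writes.

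There is, however, a concrete gap in your final summation. On the half $k\le n/2$ you propose the bound $\|C^{n-k-1}(\mathds{1}-C)^2\|\le\|C^{n-k-1}(\mathds{1}-C)\|\cdot\|\mathds{1}-C\|\le 2K_{\alpha}/(n-k)$. Each such term is indeed $O(1/n)$, but there are about $n/2$ of them, so this half of the sum is $O(1)$, not $O(1/n)$; the same happens on the other half if you pair the first-order analytic decay $1/(k+1)$ with the crude bound $\|C^{n-k-1}(\mathds{1}-C)\|\le 2$. As written, your asymmetric splitting therefore proves nothing (it is even weaker than the symmetric product bound, which at least gives $O(\ln n/n)$). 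The repair is to use \emph{quadratic} decay on each half: write $C^{m}(\mathds{1}-C)^2=\bigl(C^{m_1}(\mathds{1}-C)\bigr)\bigl(C^{m_2}(\mathds{1}-C)\bigr)$ with $m_1+m_2=m$, $m_i\approx m/2$, and apply Proposition \ref{prop:2.1.10} twice to get $\|C^{m}(\mathds{1}-C)^2\|\le c\,K_{\alpha}^2/(m+2)^2$; analogously, holomorphy of the semigroup generated by the bounded $m$-sectorial operator $\mathds{1}-C$ gives $\|(\mathds{1}-C)^2e^{k(C-\mathds{1})}\|\le c_{\alpha}^2/k^2$. Then $\sum_{k\le n/2}(n-k)^{-2}=O(1/n)$ and $\sum_{k> n/2}(k+1)^{-2}=O(1/n)$, the cubic remainder is lower order, and your argument closes --- modulo the $(\cos\alpha)^{-2}$ bookkeeping, which both you and the paper leave implicit.
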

\begin{proof} By virtue of Proposition \ref{prop:2.1.12-1} one gets that
$\{e^{- t\, A/n}\}_{n \geq 1}$ for $t\ge 0$ and each $n \in \mathbb{N}$
is a holomorphic quasi-sectorial contraction $C_0$-semigroup $\{e^{- t\, A/n}\}_{t\ge 0}$.
Let $C : =  e^{- t\, A/n}$. Then by the operator-norm Chernoff estimate
(Theorem \ref{th:6.2.2}) we obtain
\begin{equation}\label{eq:E4}
\left\|e^{- t\, A} - e^{-n \,(\mathds{1} - e^{- t\, A/n})}\right\|
\leq \frac{L_\alpha}{n^{1/3}} \ , \quad t\ge 0 \ , \quad  n \in \mathbb{N} \ .
\end{equation}
This yields the operator-norm approximation of quasi-sectorial contraction semigroup
$\{e^{- t\, A}\}_{t\ge 0}$ by the \textit{Dunford-Segal} approximants
$\{e^{-n \,(\mathds{1} - e^{- t\, A/n})}\}_{n \geq 1}$  with the rate $O(1/n^{1/3})$ on a
Hilbert space $\mathfrak{H}$.

The uplifting the estimate (\ref{eq:E4}) to sectorial dependent optimal rate (\ref{eq:E3})
follows \textit{verbatim} the arguments in the proof of Proposition \ref{prop:2.1.12-1}.
\end{proof}

Note that our definition of the \textit{Dunford-Segal} approximants is (slightly) different from
that in Refs.\citenum{GoTo14} (Corollary 1.6 b), where assertion was extended to a Banach space
for any bounded holomorphic $C_0$-semigroup.
\section{Concluding remarks} \label{sec:6}
1. Summarising, we infer that for {quasi-sectorial} contractions (Definition \ref{def:2.1.2})
one obtains, instead of \textit{divergent} (for $n \rightarrow \infty$) Chernoff's estimate
(\ref{eq:2.1.10}), the estimate (\ref{eq:2.1.15}), which converges for
$n \rightarrow \infty$ to zero in the \textit{operator-norm} topology.
Note that the rate $O(1/n^{1/3})$, (\ref{eq:6.2.5}), of this convergence is obtained with
help of the {Poisson representation} and the {Tchebych\"{e}v inequality} in the spirit of the
proof of Lemma \ref{lem:1.8.23}, and that it is not optimal.\\
2. The estimate ${M_{\alpha}}/{n^{1/3}}$ in the $(\sqrt[3]{n})^{-1}$-Theorem \ref{th:6.2.2}
can be improved by a more refined lines of reasoning.
For example, scrutinising our probabilistic arguments in Section \ref{sec:2} one can find a more
precise Tchebych\"{e}v-type bound for estimate of \textit{tails}. This improves the estimate
(\ref{eq:2.1.15}) and provides the rate $O(\sqrt{\ln(n)/n})$, see Ref.\citenum{Pau04} (Theorem 1).
Although again it is possible only for \textit{quasi-sectorial} contractions ensuring due to
Proposition \ref{prop:2.1.10} the \textit{operator-norm} contr\^{o}l (\ref{eq:2.1.15-2}) of
the \textit{central} part.\\
3. The very same improvement permits also to amend the estimate for the rate of convergence in
the \textit{Euler product formula} (\ref{eq:E1}) from $O(\ln(n)/n)$ (Ref.\citenum{CaZ01}
(Theorem 5.1)) to the optimal $O(1/n)$, Ref.\citenum{Pau04} (Theorem 4), cf. the proof of the
estimate $O(1/n)$ in Ref. \citenum{BenPau04} (Theorem 1.3).

A careful analysis of the \textit{numerical range} localisation for quasi-sectorial
contractions \cite{Zag08, ArZ10}, which are generated in a Hilbert space $\mathfrak{H}$ by
$m$-sectorial operators with a semi-angle $\alpha\in [0,\pi/2)$, allows to uplift the
operator-norm estimate
for the rate of convergence of the \textit{Euler formula} (\ref{eq:E2})
to the ultimate optimal $\alpha$-dependent rate $O(1/n)$, see Ref. \citenum{ArZ10} (Theorem 4.1).\\
4. We note that in case of \textit{self-adjoint} contractions $C$ (that is, for $\alpha =0$)
with help of the {spectral representation} one can easily obtain the
optimal rate $O(1/n)$ for the \textit{Ritt property} (\ref{eq:2.1.14}), for the
\textit{Chernoff estimate} (\ref{eq:6.2.8}), as well as for the
\textit{Euler formula} (\ref{eq:E2-1}). \\
5. Theorem \ref{th:6.2.3} is an illustration of a \textit{direct} application of the Chernoff estimate
in the approximation theory of holomorphic $C_0$-semigroups for $m$-sectorial generators in
Hilbert space for a particular case of the \textit{Dunford-Segal} approximants.

Note that in Ref.\citenum{GoTo14} and Ref.\citenum{GoKoTo19} a generalisation on Banach space
was developed for approximants involving the \textit{Bernstein functions},
see (1.4) in Ref.\citenum{GoKoTo19}.
We remark that a similar generalisation of approximants is known since
Ref.\citenum{Kato74} as the \textit{Kato functions} for the Trotter-Kato product formul\ae, see
Ref.\citenum{Zag19} (Appendix C) for details.

\frenchspacing
\section*{Acknowledgements}
The strong convergence with the rate (c) (see, Section \ref{sec:3}) was
established for the first time by T. M\"{o}bus and C. Rouz\'{e} in Ref.\citenum{MR21} (Lemma 4.2)
by the method, which is different from that in our Theorem \ref{th:1.1.1}.
I am thankful to Tim M\"{o}bus for useful correspondences and remarks concerning Ref.
\citenum{Zag17} (Lemma 2.1).

These Notes were inspired by my lecture delivered at the International Conference
"Selected Topics in Mathematical  Physics" dedicated to the 75th anniversary of Igor V. Volovich
(27-30 September 2021, Steklov Mathematical Institute, Moscow).
I am grateful to Organizing Committee of the Conference for invitation.

\end{document}